\newtheorem{theorem}{Theorem}[section]
\newtheorem{lemma}[theorem]{Lemma}
\theoremstyle{definition}
\newtheorem{definition}[theorem]{Definition}
\newtheorem{remark}[theorem]{Remark}
\numberwithin{equation}{section}
\newtheorem*{acknowledgement*}{Acknowledgments}
\newcommand{\dx}[1]{\mathrm{d}{#1}}
\newcommand{\eqdef}{\stackrel{\mathrm{def}}{=}}
   \def\R{\mathbb{R}}
   \def\N{\mathbb{N}}
   \def\Z{\mathbb{Z}}
   \def\wlim{\mathop{w-\rm{lim}}}
   \newcommand{\beq}{\begin{equation}}
   \newcommand{\eeq}{\end{equation}}
   \newcommand{\ba}{\begin{array}}
   \newcommand{\ea}{\end{array}}
\def\i {\mathbf{i}}
\newcommand{\pairing}[2]{\langle #1,#2 \rangle}
\title[Magnetic Schr\"odinger]{Existence in the nonlinear Schr\"odinger equation with bounded magnetic field}
\author[I.Schindler]{Ian Schindler}
\address{Institut de Mathématiques de Toulouse \& TSE, Universit\'{e} Toulouse I Capitole, 1 Esplanade de l'Université, 31080 Toulouse, Cedex 06, France}
\email{ian.schindler@ut-capitole.fr}
\author[C. Tintarev]{Cyril Tintarev}
\email{cyril.tuntarev@protonmail.ch}
\thanks{One of the authors (I.S.) acknowledges funding from ANR under grant ANR-17-EUR-0010 (Investissements d’Avenir program).
Another author (C.T.) thanks CEREMATH at University of Toulouse 1 Capitole for their warm hospitality. He also acknowledges access to the library resources of Uppsala University, magnanimously left to his perusal.}
\keywords{Schr\"odinger operator, magnetic field, ground state, concentration compactness,  profile decomposition, critical points}
\subjclass[2010]{35Q40, 35Q60, 35J20, 35J61, 46B50.}
\date{\today}
\begin{document}

\begin{abstract}
The paper studies existence of ground states for the nonlinear Schrödinger equation
\begin{equation}\label{eq:magschr}
-(\nabla+iA(x))^2u+V(x)u=|u|^{p-1}u	,\quad 2<p<2^*,
\end{equation}
with a general external magnetic field. In particular, no lattice periodicity or symmetry of the magnetic field, or presence of external electric field is required. 
\end{abstract}
\maketitle

%%%%%%%%%%%%%%%%%%%%%%%%%%%%%%%%%%%%%%%%%%%%%%%%%%%%%%%%%%%%%%%%%%%%%%%%%%%%%%%%

\section{\label{intro}Introduction}
We study the existence of solutions for the nonlinear Schrödinger equation with bounded external magnetic field $B$ on $\R^N$. External magnetic field enters Schr\"odinger equation, as well as other equations of quantum mechanics, by addition of a real-valued covector field $A$, called the magnetic potential, to the momentum operator $\frac{\hbar}{i}d$. Magnetic field, which is a measurable quantity, is a differential 2-form $B=d A$, while the magnetic potential is defined up to an arbitrary additive term $d\varphi$, where $\varphi:\R^N\to \R$ is an  arbitrary scalar function. Magnetic momentum 
$\frac{\hbar}{i} d+A$ has the following gauge invariance property:
\begin{equation*}\label{eq:shifts0}
	(\frac{\hbar}{i} d+A)(e^{\i\frac{\varphi}{\hbar}}u)=
	e^{\i\frac{\varphi}{\hbar}}
	(\frac{\hbar}{i}d+A+d\varphi)u.
\end{equation*}
In this paper we follow the convention that normalizes the mass of the particle and sets the Planck constant $\hbar$ to be equal to $1$ (which is always possible by rescaling the time and space variables).

To our best knowledge, the earliest existence result for nonlinear magnetic Schr\"odinger equation 
is the paper by Esteban and Lions \cite{LionsMag} where the magnetic field is assumed to be constant. Their approach was generalized to periodic magnetic field by Arioli and Szulkin \cite{ArioliSzulkin} (see also \cite{SchinTinMag}, \cite{BegoutSchindler}). In addition to that, there is a number of important existence results for quasiclassical solutions, that is, solutions of the nonlinear magnetic Schr\"odinger equation that exist if $\hbar$ is sufficiently small, see \cite{Kurata, Cingolani16, Cingolani17} and references therein, as well as existence for specific magnetic fields such as the Aharonov-Bohm field, \cite{ClappSzulkin}, and studies of properties of solutions to the magnetic Schr\"odinger equation, such as \cite{BonNysSch}.
 
A major technical difficulty in proving existence results for the magnetic Schr\"odinger equation is the lack of compactness of Sobolev embeddings in the whole $\R^N$, and it is overcome by the use of a concentration-compactness argument. For instance, in \cite{ArioliSzulkin}, which generalizes \cite{LionsMag}, one controls the loss of compactness in problems with a periodic magnetic field by means of energy-preserving operators
\begin{equation}
\label{eq:m-shifts}
g_y := \; u\mapsto e^{\i\varphi_y(\cdot)}u(\cdot-y), \quad y\in\Z^N,
\end{equation}
(where $\varphi_y$ is a suitable re-phasing function (see \eqref{eq:phiy} below)),
known as \emph{magnetic shifts} with their inverse
\begin{equation*}
\label{eq:m-invshifts}
g^{-1}_y := \; v\mapsto e^{-\i\varphi_y(\cdot+y)}v(\cdot+y), \quad y\in\Z^N\,.
\end{equation*}
Another concentration mechanism applies to quasiclassical asymptotics and is not considered here. 

The concern of this paper is in finding sufficient existence conditions for solutions to \eqref{eq:magschr} that are not restricted to the periodic magnetic field and, furthermore, do not use strong electric field at infinity to dominate both electric and magnetic field in the whole space, as in the existence condition of \cite{SchinTinMag} 
\begin{equation}
	\label{eq:rostock}
|A(x)|^2 + V(x) < V_\infty\eqdef\lim_{|x|\to\infty}V(x). 
\end{equation}
For the case of periodic magnetic field, a sufficient condition for existence of a ground state, provided by Arioli and Szulkin (generalizing the case of constant magnetic field of \cite{LionsMag}), is $V<V_\infty$, which can be understood as energy penalty by electric field alone when the magnetic field does not change. 
A different condition for existence of a solution (which is not a ground state, and the electric potential is constant) is offered by \cite{DT}:
\begin{equation}
	B_\infty =0, \|B\|_\infty<b_0 ,
\end{equation}
where $b_0$ is a specified positive constant. There is no assumption of periodicity of the magnetic field in \cite{DT}. The lack of periodicity there is handled by a refined concentration-compactness analysis (see Theorem \ref{thm:MagneticPD} and Theorem \ref{thm:energies} below) that uses a suitable non-isometric counterpart of magnetic shifts. While the declared objective of that paper is to describe the concentration mechanism in the general non-periodic case and give a sample existence result. The present paper uses the concentration-compactness approach of \cite{DT} in order to refine the existence condition of \eqref{eq:rostock}. Its existence condition, that uses limits at infinity evaluated via lattice shifts,
 \begin{equation}
	\label{eq:not rostock}
	|A(x)|^2 + V(x) <  |A_\infty (x)|^2+V_\infty(x),
	\end{equation}
does not require the magnetic field to be dominated by the electric field at infinity. Remarkably, however, this condition is also accompanied with a requirement of sufficiently small $B$ like in \cite{DT} although this time it arises to assure that the problem at infinity has a nonvanishing ground state. 

The paper is organized as follows. In Section~2 we recall basic properties of magnetic Sobolev spaces associated with respective energy functionals, define (non-isometric) magnetic shift operators as well as asymptotic values of magnetic field and energy at infinity. In Section~3 we quote the structural result from \cite{DT} about asymptotic behavior of sequences in magnetic Sobolev spaces. This result, usually called profile decomposition, was first proved, for general sequences in Sobolev spaces, by Solimini \cite{Sol}, and then extended, to Hilbert spaces and to Banach spaces in \cite{SchinTin} and in \cite{ST1} (with further details in \cite{DSU}), respectively. The profile decomposition of \cite{DT} that we use here is set in a Frechet space and does not follow from prior functional-analytic results, it is, however, close in spirit to profile decomposition for Sobolev spaces on manifolds, \cite{SkrTi}. 

The main results of this paper are Theorem~\ref{thm:1}, Theorem~\ref{thm:2} and Theorem~\ref{thm:PD*}. 

Two first two theorems state existence of minimizers in the constraint problem \eqref{eq:kAV} under a penalty condition. The third one deals with a model minimization problem \eqref{eq:k*} involving the Aharonov-Bohm magnetic potential, a singular electric potential, and critical Sobolev nonlinearity.

The minimizers, also called ground states, satisfy the Euler-Lagrange equation 
\[
-(\nabla+iA(x))^2u+V(x)u=\lambda|u|^{p-1}u	
\]
with some $\lambda>0$. Since the right and the left hand sides of the equation have different homogeneity, a suitable scalar multiple of $u$ satisfies \eqref{eq:magschr}.

In Appendix we present profile decompositions for sequences in Sobolev spaces in relation to critical Sobolev embeddings.

\section{\label{notation} Preliminaries: magnetic Sobolev space, magnetic shifts, energy at infinity}
Throughout the paper the set $\N$ of natural numbers will be understood to include zero.
To an extent that it causes no ambiguity, we will treat the magnetic potential $A$ either as a linear form on $R^N$ or as its coordinate representation as a vector field on $\R^N$. We define 
\begin{equation}\label{eq:nablaA}
\nabla_A (u)\eqdef  (\nabla+ \i A)u,
\end{equation}
and the scalar product
\begin{equation*}\label{eq:scalar}
\pairing{u}{v}_A\eqdef\int_{\R^N} \nabla_A u(x) \overline{\nabla_A v(x)}\dx{x}\,,
\end{equation*}
(the symbol of the scalar product on $\R^N$ will be omitted throughout the paper). 
Then, we introduce the (Dirichlet-type) energy functional
\begin{equation}\label{eq:hamiltonian}
E_A(u)\eqdef \int_{\R^N}|\nabla_A u(x)|^2\dx x.
%=\int_{\R^N}|\nabla u(x)+ \i A(x) u(x)|^2\dx x.
\end{equation}
Elementary calculations using the Cauchy inequality yield the following well known pointwise estimates:

\begin{equation}\label{eq:H1ABound}
|\nabla_A u(x)|^2\geq \frac{1}{2}|\nabla u(x)|^2-8|A(x)|^2 \, |u|^2\,,x \in \R^N,\\
\end{equation}
or, conversely,
\begin{equation}\label{eq:H1Bound}
|\nabla_A u(x)|^2\leq 2|\nabla u(x)|^2+16|A(x)|^2 \, |u|^2\,,x \in \R^N.\\
\end{equation}
In this section we assume 
\begin{equation}
	\label{LN}
\begin{cases}
	A \in L^N_{\mathrm loc}(\R^N) 	& N \ge 3,\\
	A \in L^{2 + \epsilon}_{\mathrm loc}(\R^2) 	& N = 2, \epsilon >0,
\end{cases}
\end{equation}
or that $A$ is the Aharonov-Bohm potential $A(x)=\lambda\frac{(x_1,-x_2,0,\dots,0)}{x_1^2+x_2^2}$
Further on, $A$ will be restricted to a smaller class. 
When dealing with a magnetic potential $A$ we shall define $\dot H_A^{1,2}(\R^N)$ as the completion of $C_0^\infty$ with respect to the norm $\|u\|_A=(E_A(u))^\frac12$
(note that $\dot H^{1,2}_A(\R^N)$ is a space of measurable functions whenever $N>2$ or $N=2$ and $dA\not\equiv 0$, see e.g.\ \cite{Enstedt}), and the space 
$H_A^{1,2}(\R^N)$ as the intersection $\dot H_A^{1,2}(\R^N)\cap L^2(\R^N)$ (equipped with the standard intersection norm). From \eqref{LN} it easily follows that $H_A^{1,2}(\R^N)$
is continuously embedded in $H^{1,2}_{\mathrm loc}(\R^N)$ and that Frechet spaces $H_{A\,\mathrm loc}^{1,2}(\R^N)$ qnd $H^{1,2}_{\mathrm loc}(\R^N)$ coincide (for details see, for example, \cite{DT}).
 Note that $|u|^2$ in this model equals the probability density of a particle in space which doesn't change if $u$ is multiplied by $e^{\i\varphi}$ (on the other hand the magnetic potential $A$ and $A+\nabla\varphi$ give rise to the same magnetic field $B=dA$).

Note also that, when $A\equiv 0$, we obtain the corresponding usual Sobolev spaces $\dot H^{1,2}(\R^N)$ and $H^{1,2}(\R^N)$ respectively.
Setting, as usual, $2^*=\begin{cases}
\frac{2N}{N-2}, &N>2\\
\infty, &N=2 \end{cases}$
the critical Sobolev exponent, we have the following continuous embeddings
$H^{1,2}(\R^N)\hookrightarrow L^p(\R^N)$ for every $p\in[2,2^*)$,
 and $H^{1,2}(\R^N) \hookrightarrow \dot H^{1,2}(\R^N)\hookrightarrow  L^{2^*}(\R^N)$ when $N>2$. Furthermore, from the well-known {\em{diamagnetic inequality}} 
\begin{equation}\label{eq:diamag}
|\nabla_A u(x)|\ge \left|\nabla|u(x)|\right|,
\end{equation}
we deduce that
\begin{equation*}
\label{eq:ineq}
E_A(u)\ge E_0(|u|)\ge C\|u\|_{2^*}^2, \quad \mbox{ if } N>2,
\end{equation*}
and, as a consequence, the continuous embeddings
\begin{equation*}
\label{eq:emb}
\dot H_A^{1,2}(\R^N)\hookrightarrow L^{2^*}(\R^N)\quad \mbox{ and }\quad H_A^{1,2}(\R^N)\hookrightarrow  L^{p}(\R^N) \;\forall p\in[2,2^*),
\end{equation*}
(the latter one extends by analogous argument to the case $N=2$).

Let $N\ge 2$ and let $\Lambda_1$ and $\Lambda_2$ denote the linear spaces, respectively, of 1-forms on $\R^N$ and of antisymmetric 2-forms on $\R^N\times\R^N$.
Note that when $B$ is a constant magnetic field, its magnetic potential $A$ equals, up to a gradient of an arbitrary function, $\frac12 Bx$. Its linear growth suggests that, generally, one shouold not try to define, for a given function $u\in H^{1,2}_A(\R^N)$, the limit of the magnetic energy functional \eqref{eq:hamiltonian} when the function $u$ is shifted at infinity like in the non-magnetic case (e.g.\ \cite{Lions1}), by taking $\lim_{|y|\to\infty} E_0(u(\cdot+y))$. The appropriate construction takes advantage, instead, of the gauge invariance of the magnetic energy.
In the case of lattice-periodic $B$, Arioli and Szulkin \cite{ArioliSzulkin} derived energy functionals at infinity using "magnetic shifts" 
\begin{equation}
	g_y := u\mapsto  e^{\i \varphi_y(\cdot)} u(\cdot-y) \quad \forall u\in C_0^1(\R^N),\;y\in\Z^N,
\end{equation}
where the function $\varphi_y$ is defined by the property 
\begin{equation}
	\label{eq:phiy}
	A(\cdot -y)=A(\cdot)+\nabla\varphi_y(\cdot).
\end{equation}
(Indeed, this relation is equivalent to lattice-periodicity of the field $dA(\cdot-y)=dA$.)

In \cite{DT} analogous magnetic shifts were introduced without assumption of perioidicity of the magnetic field (the case $N=2$ was not included, but the original proof remains valid in this casse as well):

\begin{lemma}[\cite{DT}]
	\label{lem:phase2}
	Let $B=d A$ be a bounded magnetic field with $A\in C^1_\mathrm{loc}(\R^N,\Lambda_1)$ and $A(0)=0$. For every  $y\in\R^N$ there exists a function $\varphi_y\in C^1_{\mathrm loc}$, such that $\varphi_0=0$ and
	\begin{equation}
		\label{eq:Aineq}
		|A+\nabla\varphi_y|\le \|B\|_\infty\,|x-y|,\quad  x\in\R^N,
		\end{equation}
	where 
	\begin{equation}
		\label{eq:Binfty}
		\|B\|_\infty^2\eqdef \sum_{n=1}^N\sum_{m<n}\|\partial_n A_m-\partial_m A_n\|_\infty^2,
	\end{equation}
\end{lemma}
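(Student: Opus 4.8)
The plan is to produce $\varphi_y$ explicitly as a line integral of $A$, which amounts to gauging $A$ into the Poincar\'e (radial) gauge centered at $y$. Concretely, I would set
\[
\varphi_y(x) \;=\; -\int_0^1 A\bigl(y + t(x-y)\bigr)\cdot(x-y)\,\dx t ,
\]
the integral of the $1$-form $A$ along the segment joining $y$ to $x$. Since $A\in C^1_{\mathrm loc}$, the integrand is $C^1$ in $x$ on every compact set and differentiation under the integral sign is legitimate, so $\varphi_y\in C^1_{\mathrm loc}$ as required.

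The heart of the matter is a fundamental-theorem-of-calculus identity that evaluates $A+\nabla\varphi_y$. Writing $z=x-y$ and differentiating (summation over the repeated index $j$ understood),
\[
\partial_{x_i}\varphi_y(x) = -\int_0^1\Bigl[\,t\,z_j\,\partial_i A_j(y+tz) + A_i(y+tz)\,\Bigr]\dx t ,
\]
while integrating $\tfrac{d}{dt}\bigl[t\,A_i(y+tz)\bigr]=A_i(y+tz)+t\,z_j\partial_j A_i(y+tz)$ over $t\in[0,1]$ gives the reconstruction $A_i(x)=\int_0^1\bigl[A_i(y+tz)+t\,z_j\partial_j A_i(y+tz)\bigr]\dx t$. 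Subtracting, the zeroth-order terms cancel and I obtain
\[
(A+\nabla\varphi_y)_i(x)=\int_0^1 t\,z_j\bigl(\partial_j A_i-\partial_i A_j\bigr)(y+tz)\,\dx t ,
\]
so that $A+\nabla\varphi_y$ is precisely the radial-gauge representative at $y$, whose integrand is built from the antisymmetric field $B_{ji}=\partial_j A_i-\partial_i A_j$. The pointwise bound then follows by Cauchy--Schwarz in the index $j$ together with the antisymmetry of $B$: at each point $w$ one has $\sum_{i,j}|B_{ij}(w)|^2=2\sum_{i<j}|B_{ij}(w)|^2\le 2\|B\|_\infty^2$, so the vector with components $z_j B_{ji}(w)$ has Euclidean norm at most $\sqrt2\,\|B\|_\infty|z|$; since $\int_0^1 t\,\dx t=\tfrac12$, this yields $|A+\nabla\varphi_y|\le \tfrac{1}{\sqrt2}\|B\|_\infty|x-y|\le\|B\|_\infty|x-y|$, even with a factor to spare.

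The one delicate point, and where I expect the main obstacle, is the normalization $\varphi_0=0$. Taken together with the bound at $y=0$ it demands $|A(x)|\le\|B\|_\infty|x|$, which is \emph{not} a consequence of $A(0)=0$ alone: it holds exactly when $A$ already equals its radial-gauge representative about the origin, $A=A^{(0)}$. I would therefore first carry out a preliminary gauge transformation $A\mapsto A+\nabla\chi$ putting $A$ in this radial gauge; this is harmless because it changes neither $B=dA$ nor the value $A(0)=0$. With $A=A^{(0)}$ the explicit formula gives $\nabla\varphi_0=A^{(0)}-A=0$ and $\varphi_0(0)=0$, hence $\varphi_0\equiv0$, while for $y\neq0$ the identity and estimate above apply verbatim. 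The remaining verifications — differentiation under the integral, and the index bookkeeping in the Cauchy--Schwarz step that converts $\|B\|_\infty$ into the stated quadratic-sum norm — are routine.
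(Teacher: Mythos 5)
Your proof is correct and is essentially the same argument that stands behind the paper's statement: the paper gives no proof of its own, deferring to \cite{DT} (Lemma~3.1), whose construction is precisely your Poincar\'e (radial-gauge) line integral $\varphi_y(x)=-\int_0^1 A\bigl(y+t(x-y)\bigr)\cdot(x-y)\,\dx{t}$, with the same fundamental-theorem-of-calculus identity $(A+\nabla\varphi_y)_i=\int_0^1 t\,z_j(\partial_jA_i-\partial_iA_j)(y+tz)\,\dx{t}$ and Cauchy--Schwarz bound (indeed with the sharper constant $\tfrac12\|B\|_\infty$ available for antisymmetric matrices). Your flag on the normalization is also well taken: as literally written, $\varphi_0=0$ together with \eqref{eq:Aineq} at $y=0$ forces $|A(x)|\le\|B\|_\infty|x|$, which does not follow from $A(0)=0$ alone (take $A=\nabla\chi\not\equiv 0$ with $\nabla\chi(0)=0$, so $B=0$), so the statement must be read modulo a preliminary gauge change putting $A$ in the radial gauge at the origin --- harmless here because all subsequent uses of the lemma (magnetic shifts, the energy identities \eqref{eq:commutation}--\eqref{eq:EAphasing}) are gauge-invariant, exactly as your fix exploits.
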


The function $\varphi$ given by Lemma~\ref{lem:phase2} is not uniquely defined. A construction of such function is given by \cite{DT} in Lemma~3.1.
Defining the corrected potential $A_y$ by 
\begin{equation}
	\label{eq:correctedA}
	A_y\eqdef A+\nabla\varphi_y,
\end{equation}
note that
by \eqref{eq:Aineq} the value of 
$A_y$  at $y$ is
is zero.

\begin{definition}[\cite{DT}]
\label{def:g}
Let $B=d A$ be a magnetic field with $A\in C^1_{\mathrm{loc}}(\R^N, \Lambda_1)$. 
Any map 
\begin{equation}
\label{eq:g}
g_y := u\mapsto  e^{\i \varphi_y(\cdot)} u(\cdot-y), \quad u\in C_0^1(\R^N),\;y\in\R^N,
\end{equation}
with the $C^1(\R^N)$ function $\varphi_y$ as in Lemma \ref{lem:phase2} is called \emph{a magnetic shift} (relative to the magnetic field $B$) determined by the vector $y$.
\end{definition}

Elementary calculations  using 
\eqref{eq:correctedA} (which hold for any continuous real-valued function in the place of $\varphi_y$) show that 
\begin{equation}\label{eq:commutation}
\nabla_A(g_y u)= g_y(\nabla_{A_y(\cdot +y)}u)\quad
\mbox{ and }\quad
(g_y)^{-1}(\nabla_A u)= \nabla_{A_y(\cdot +y)}((g_y)^{-1}u).
\end{equation}
Furthermore, we get
\begin{equation*}\label{eq:scalargauge}
\pairing{g_y u}{g_y v}_A= \pairing{u}{v}_{A_y(\cdot +y)}\quad
\mbox{ and }\quad
\pairing{(g_y)^{-1}u}{(g_y)^{-1}v}_{A_y(\cdot +y)}=\pairing{u}{v}_A,
\end{equation*}
and, in particular, that 
\begin{equation}\label{eq:EAphasing}
E_A(g_y u) =
E_{A_y(\cdot+y)}(u)
\quad
\mbox{ and }\quad
E_A(u)=E_{A_y(\cdot +y)}((g_y)^{-1}u).\\
\end{equation}

In what follows $\dot C^1(\R^N)$ denotes the space of functions with uniformly bounded continuous derivatives and  $\dot C^{0,1}(\R^N)$ denotes the space of functions satisfying $|f(x)-f(y)|\le C|x-y|$ for all $x,y\in\R^N$. In particular,  $\dot C^1(\R^N)$ contains all linear functions on $\R^N$.
\begin{remark}\label{rem:Lipschitz}
	Let $Y=(y_k)_{k\in\N}$ be a divergent sequence in $R^N$. If $A\in \dot C ^1(\R^N,\Lambda_1)$, then applying Arzel\`a-Ascoli theorem 
	to the sequence $(A_{y_k}(\cdot + y_k))_{k\in\N}$,
	we get a renamed subsequence such that 
	$(A_{y_k}(\cdot+y_k))_{k\in\N}$ converges, uniformly on bounded sets to some Lipschitz  function $A^{(Y)}_\infty\in \dot C^{0,1}(\R^N,\Lambda_1)$, and and $(dA_{y_k}(\cdot+y_k))_{k\in\N}$ converges almost everywhere with a constand bound to a bounded function $B^{(Y)}_\infty=dA^{(Y)}_\infty\in L^\infty(\R^N,\Lambda_2)$. 
\end{remark}
\begin{definition}
\label{def:AY}
Let $Y= (y_k)_{k\in\N}$ be a divergent sequence in $\R^N$,
% such that $|y_k|\to\infty$, 
and consider the renamed subsequence $(y_k)_{k\in\N}$ and the associated magnetic potential $A^{(Y)}_\infty\in C^{0,1}_\mathrm{loc}(\R^N,\Lambda_1)$ described in Remark~\ref{rem:Lipschitz}. Then, the corresponding functional $E_{A^{(Y)}_\infty}$ (see \eqref{eq:hamiltonian}) will be called \emph{the energy at infinity relative to the renamed subsequence $Y=(y_k)_{k\in\N}$}, corresponding to the \emph{magnetic field at infinity} $B^{(Y)}_\infty= dA^{(Y)}_\infty$. 
\end{definition}

\section{Profile decomposition}\label{PD}
In this section we quote  from \cite{DT} the results on structure of bounded sequences in $H_A^{1,2}(\R^N)$. While the original result is stated for $N\ge 3$, the original argument extends to the case $N=2$ with no modifications. 
\begin{definition}
	A set $\Xi\subset \R^N$ is called a discretization of $\R^N$ if $0\in \Xi$,
	\[
	\inf_{x,y\in \Xi, x\neq y} |x-y|>0,
	\]
	and if, for some $\rho>0$, $\{B_\rho(x)\}_{x\in\Xi}$ is a covering of $\R^N$ of uniformly finite multiplicity.
	
	%\[
	%\{B_\rho(x)\}_{x\in\Xi}\mbox{ is a covering of $\R^N$ of uniformly finite multiplicity}.
	%\]
\end{definition}
Note that if $\Xi$  is a discretization of $\R^N$ then, for any $R>\rho$, the covering $\{B_R(x)\}_{x\in\Xi}$ is still of uniformly finite multiplicity. A trivial example for a discretization of $\R^N$ is $\Z^N$.
%

%We will use the notation:
%$\N_0\eqdef \N\setminus\{0\}$, and, for simplicity,
When we shall deal with sequences 
$Y^{(n)}\eqdef (y_k^{(n)})_{k\in\N}$ in $\R^N$ depending on a parameter $n\in \N$, we will abbreviate the notation for the magnetic potentials at infinity $A^{(Y^{(n)})}_\infty$ as $A^{(n)}_\infty$ (and the corresponding magnetic field as $B^{(n)}_\infty$).
Moreover, for any $n$ and for any $k$, $g_{y^{(n)}_k}$ will represent the magnetic shift defined by \eqref{eq:g} with $y=y^{(n)}_k$, and we will use the abbreviated notation
\begin{equation}\label{eq:short}
g^{(n)}_k\eqdef g_{y^{(n)}_k}\quad \mbox{ as well as }
\quad A^{(n)}_k \eqdef A_{y^{(n)}_k}(\cdot +y^{(n)}_k)\,.
\end{equation}
In what follows we use weak convergence in $H^{1,2}_{\mathrm{loc}}(\R^N)$, which is a weak convergence in a Fr\'echet space. In particular weakly converging sequences will converge in the sense of distributions, strongly in $L^p(\Omega)$ whenever $\Omega\subset \R^N$ is a bounded measurable set and $p\in(2,2^*)$, as well as almost everywhere. 
\begin{theorem}
	\label{thm:MagneticPD} Let  $\Xi\ni 0$ be a discretization of $\R^N$.
	Let $A\in \dot C^1(\R^N,\Lambda_1)$, $N\ge 2$, $A(0)=0$,
	and let	$(u_{k})_{k\in\N}$ be a bounded sequence in $H_A^{1,2}(\R^N)$.
	Then, there 
	exist $v^{(n)}  \in
	H^{1,2}_\mathrm{loc}(\R^N)$ (with $v^{(0)}=\wlim_{k\to \infty} u_k\in H_A^{1,2}(\R^N)$), $Y^{(n)}:=(y_{k}^{(n)})_{k\in\N}\subset \Xi$, $n\in \N$,
	with $y_{k} ^{(0)} =0$, 
	%with $n\in\N_0$,
	such that, on a renamed subsequence, 
\begin{eqnarray}
\label{mag-shifts}  
	%\exp({i\varphi_{y_k^{(n)}}(\cdot+y_k^{(n)})})u_k(\cdot+y_{k} ^{(n)})\rightharpoonup v^{(n)},
&& (g^{(n)}_k)^{-1} u_k \rightharpoonup v^{(n)}\quad \mbox{ in } H^{1,2}_{\mathrm{loc}}(\R^N),\\
&& \label{separates}
|y_{k} ^{(n)}-y_{k} ^{(m)}|\to\infty  \mbox{ for } n \neq m,\\
&& \label{BBasymptotics-mag}
	r_k\eqdef u_{k} - \sum_{n=0}^\infty
		g^{(n)}_kv^{(n)} \to 0\, \mbox{ in }
		L^p(\R^N), \,\forall p\in (2,2^*),
			\end{eqnarray}
and the series $\sum_{n=0}^\infty
		|v^{(n)}(\cdot-y_{k}^{(n)})|$ and the series in (\ref{BBasymptotics-mag}) converge
	 unconditionally (with respect to $n$) and uniformly in $k$, in 
$H^{1,2}(\R^N)$ and in $H^{1,2}_\mathrm{_{loc}}(\R^N)$ respectively.
Moreover, for any $n$,
\begin{equation}\label{eq:gsaturation}
g^{-1}_{y_k}u_k\rightharpoonup 0  \mbox{ in } H^{1,2}_\mathrm{_{loc}}(\R^N)\quad \mbox{ for all } \; (y_k)_{k\in\N} \mbox{ in } \R^N\mbox{ s.t. }|y_k-y^{(n)}_k|\to+\infty.
\end{equation}
\end{theorem}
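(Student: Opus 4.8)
The plan is to run the by-now-standard iterative profile-extraction scheme for bounded sequences, adapted to the (non-isometric) group $\cG$ of magnetic shifts $\{g_y\}_{y\in\Xi}$, exploiting that $H_A^{1,2}(\R^N)$ is a Hilbert space while the weak convergence at play is that of the Fréchet space $H^{1,2}_{\mathrm{loc}}(\R^N)$. Since $(u_k)$ is bounded in $H_A^{1,2}$ it is bounded in $H^{1,2}_{\mathrm{loc}}$, so after a first subsequence $u_k\rightharpoonup v^{(0)}$; I take this ordinary weak limit as the zeroth profile with $y_k^{(0)}=0$. The engine of the extraction is the notion of $\cG$-weak convergence to zero: $w_k\cw 0$ means $(g_{y_k})^{-1}w_k\rightharpoonup 0$ in $H^{1,2}_{\mathrm{loc}}$ for \emph{every} sequence $(y_k)\subset\Xi$. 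At stage $n$ I look at the remainder $w_k^{(n)}:=u_k-\sum_{m<n}g^{(m)}_kv^{(m)}$; if $w_k^{(n)}\cw 0$ the process stops, otherwise there is a sequence $(y_k^{(n)})\subset\Xi$ along which $(g^{(n)}_k)^{-1}w_k^{(n)}$ has a nonzero weak limit $v^{(n)}$, which I select so as to (nearly) maximize a concentration weight, guaranteeing that these weights tend to $0$.

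Two structural facts make the scheme close. First, \emph{asymptotic decoupling}: I must show the separation \eqref{separates}, $|y_k^{(n)}-y_k^{(m)}|\to\infty$ for $n\neq m$. This follows because if two shift sequences stayed at bounded distance along a subsequence, then $(g_{y_k^{(m)}})^{-1}g^{(n)}_k v^{(n)}$ would not converge weakly to $0$, so the profile $v^{(n)}$ would already have been absorbed when $v^{(m)}$ was subtracted, contradicting the construction; the companion saturation statement \eqref{eq:gsaturation} records exactly that no mass survives at distances diverging from every $y_k^{(n)}$. The same decoupling shows $(g^{(n)}_k)^{-1}u_k$ and $(g^{(n)}_k)^{-1}w_k^{(n)}$ share the weak limit $v^{(n)}$, which is \eqref{mag-shifts}. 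Second, a \emph{Bessel-type energy inequality} $\sum_n E_{A^{(n)}_\infty}(v^{(n)})\le\limsup_k E_A(u_k)<\infty$: here I use \eqref{eq:EAphasing} to rewrite $E_A(g^{(n)}_kv^{(n)})=E_{A^{(n)}_k}(v^{(n)})$ and Remark~\ref{rem:Lipschitz}/Definition~\ref{def:AY} to pass $A^{(n)}_k\to A^{(n)}_\infty$, so that each shifted profile carries, in the limit, the energy $E_{A^{(n)}_\infty}(v^{(n)})$, while the cross terms $\langle g^{(n)}_kv^{(n)},g^{(m)}_kv^{(m)}\rangle_A$ vanish by the asymptotic support separation coming from \eqref{separates}. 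Summability then forces $E_{A^{(n)}_\infty}(v^{(n)})\to0$ and, together with the diamagnetic inequality \eqref{eq:diamag} controlling $\bigl|\nabla|g^{(n)}_kv^{(n)}|\bigr|$, yields the unconditional convergence, uniform in $k$, of the series $\sum_n|v^{(n)}(\cdot-y_k^{(n)})|$ and of \eqref{BBasymptotics-mag}.

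The heart of the argument, and the step I expect to be the main obstacle, is the \emph{cocompactness} conclusion \eqref{BBasymptotics-mag}, i.e. that a sequence which $\cG$-converges weakly to zero converges to zero in $L^p(\R^N)$ for $p\in(2,2^*)$. The local mechanism is clear: on each ball weak $H^{1,2}_{\mathrm{loc}}$-convergence to zero upgrades, by Rellich's theorem, to strong $L^p$-convergence, and the uniformly finite covering $\{B_\rho(x)\}_{x\in\Xi}$ from the discretization lets me sum these local contributions; the difficulty is to make this summation uniform and to rule out escaping mass, which is precisely what the $\cG$-vanishing over \emph{all} sequences $(y_k)\subset\Xi$ provides once I pass $L^p$-bounds through the shifts. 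The genuinely delicate point, absent in the periodic theory of \cite{ArioliSzulkin}, is that $g_y$ is \emph{not} an isometry of the energy: transporting norms and $L^p$-mass through $g_{y_k}$ requires tracking the gauge phases and the corrected potentials $A_{y_k}(\cdot+y_k)$, and controlling their deviation from the Lipschitz limit $A^{(n)}_\infty$ of Remark~\ref{rem:Lipschitz}. Handling this non-isometry within the Fréchet setting, where weak convergence is only weak-$H^{1,2}_{\mathrm{loc}}$, is the crux; the bounds \eqref{eq:Aineq} and \eqref{eq:H1ABound}--\eqref{eq:H1Bound} are the tools I would use to keep the magnetic and ordinary gradients comparable throughout.
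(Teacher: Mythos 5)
First, a point of reference: the paper does not prove Theorem~\ref{thm:MagneticPD} at all; it quotes it from \cite{DT} (``In this section we quote from \cite{DT} the results\dots''), so the only benchmark is the proof in that reference. Your outline follows the same general strategy as \cite{DT} (iterative extraction of profiles along magnetic shifts indexed by the discretization, separation of the shift sequences, summability of profile norms, and a cocompactness step turning $\cG$-weak vanishing into $L^p$-vanishing), but as a proof it has genuine gaps, located exactly at the two points you yourself defer as ``the main obstacle'' and ``the crux''. The cocompactness assertion \eqref{BBasymptotics-mag} is described, never established. The missing content is the quantitative local-to-global argument: an interpolation inequality on the balls $B_\rho(y)$, $y\in\Xi$, summed over the covering using its finite multiplicity, yielding $\|w_k\|_{L^p(\R^N)}^p\le C\,\|w_k\|_{H^{1,2}}^2\,\sup_{y\in\Xi}\|w_k\|_{L^p(B_\rho(y))}^{p-2}$; one then chooses $y_k$ nearly attaining the supremum and, using that the gauge factor has modulus one, that \eqref{eq:Aineq} together with \eqref{eq:H1ABound} gives local $H^{1,2}$-bounds for $(g_{y_k})^{-1}w_k$, and Rellich's theorem, produces a nonzero weak limit contradicting $\cG$-weak nullity. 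One must also actually prove that the remainder is $\cG$-weakly null; this is where the near-maximal selection of concentration weights and a diagonal argument over $n$ and $k$ are consumed, but you introduce the selection and never use it.

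Second, your Bessel-type inequality is justified by ``cross terms vanish by asymptotic support separation'', and that is precisely the step that is not available here: the magnetic shifts are not isometries of a fixed Hilbert space, the inner products $\pairing{\cdot}{\cdot}_{A^{(n)}_k}$ vary with $k$, the profiles are not compactly supported, and the only weak convergence at hand is in the Fr\'echet space $H^{1,2}_{\mathrm{loc}}(\R^N)$, where there is no orthogonal expansion of $E_A(u_k)$. (This is why the energy inequality \eqref{norms-mag} is stated as a separate result, Theorem~\ref{thm:energies}, with its own nontrivial proof in \cite{DT}.) The route that actually delivers the series-convergence claims of Theorem~\ref{thm:MagneticPD} --- and the one taken in \cite{DT} --- is to pass to the moduli: by \eqref{eq:diamag} the sequence $(|u_k|)$ is bounded in $H^{1,2}(\R^N)$, and since $(g^{(n)}_k)^{-1}u_k\to v^{(n)}$ almost everywhere, one gets $|u_k(\cdot+y^{(n)}_k)|\rightharpoonup|v^{(n)}|$ in $H^{1,2}(\R^N)$, where the ordinary shifts \emph{are} isometries and the standard Hilbert-space Bessel argument applies, giving $\sum_n\bigl\||v^{(n)}|\bigr\|_{H^{1,2}}^2<\infty$ and, combined with \eqref{separates}, the unconditional convergence, uniform in $k$, of $\sum_n|v^{(n)}(\cdot-y^{(n)}_k)|$. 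Without the first item, the main decomposition \eqref{BBasymptotics-mag} is unproved; with the second item as you state it, the summability and series assertions rest on an argument that fails in this non-isometric setting.
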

\begin{theorem}\label{thm:energies} Assume conditions of Theorem~\ref{thm:MagneticPD}. A subsequence $(u_{k})_{k\in\N}$ in $H_A^{1,2}(\R^N)$ provided by Theorem~\ref{thm:MagneticPD} satisfies
	\begin{equation}\label{eq:IBL}
	\int_{\R^N}|u_k|^p\dx{x}\longrightarrow \sum_{n=0}^\infty\int_{\R^N}|v^{(n)}|^p\dx{x}\quad \mbox{ as } k\to +\infty\;\forall p\in[2,2^*),
	\end{equation}
		\begin{equation}\label{eq:IBL2}
 \sum_{n=0}^\infty\int_{\R^N}|v^{(n)}|^2\dx{x}\le \liminf_{k\to\infty} \int_{\R^N}|u_k|^2\dx{x},
	\end{equation}
and	furthermore, 
	\begin{equation} 
	\label{norms-mag} 
	\sum_{n =0}^\infty E_{A^{(n)}_\infty}(v ^{(n)})\le \liminf_{k\to \infty} E_A(u_k),
	\end{equation}
where $A^{(0)}_\infty=A$, and for $n\neq 0$, each $A^{(n)}_\infty$ is the magnetic potential at infinity relative to the sequence $Y^{(n)}=(y_k^{(n)})_{k\in\N}$ as in Definition~\ref{def:AY} and $B^{(n)}_\infty=d A^{(n)}_\infty$ is the related magnetic field.
\end{theorem}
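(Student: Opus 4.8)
The plan is to derive all three relations from a single mechanism---the asymptotic orthogonality of the dislocated profiles---combined with the gauge covariance recorded in \eqref{eq:commutation}--\eqref{eq:EAphasing} and the uniform convergence $A^{(n)}_k\to A^{(n)}_\infty$ from Remark~\ref{rem:Lipschitz}. The common device is that, since $|g^{(n)}_k v^{(n)}|=|v^{(n)}(\cdot-y^{(n)}_k)|$, any two profiles with $n\neq m$ have, after $v^{(n)}$ is cut off by $\chi_R$ to a ball $B_R$, translated supports contained in $B_{2R}(y^{(n)}_k)$ and $B_{2R}(y^{(m)}_k)$ that become disjoint for large $k$ by the separation \eqref{separates}; consequently every mixed contribution vanishes in the limit $k\to\infty$.

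For the $L^p$ identity \eqref{eq:IBL}, I would first use the unconditional convergence of $\sum_n|v^{(n)}(\cdot-y^{(n)}_k)|$, uniform in $k$, to reduce (up to an $L^p$-error that is small uniformly in $k$) to a finite partial sum $S^M_k=\sum_{n=0}^M g^{(n)}_k v^{(n)}$. Replacing each $v^{(n)}$ by $\chi_R v^{(n)}$ and invoking the disjoint-support fact above, one gets $\int|S^M_k|^p\to\sum_{n=0}^M\int|v^{(n)}|^p$ after letting $R\to\infty$; since $u_k=S^M_k+(\text{tail})+r_k$ with $r_k\to0$ in $L^p$ for $p\in(2,2^*)$ by \eqref{BBasymptotics-mag}, letting $M\to\infty$ yields \eqref{eq:IBL}. (At the endpoint $p=2$ the remainder need not vanish, which is why only the inequality \eqref{eq:IBL2} is claimed there.)

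For the $L^2$ inequality \eqref{eq:IBL2}, I would fix $M$ and $R$ and expand the nonnegative quantity $\|u_k-\sum_{n=0}^M g^{(n)}_k(\chi_R v^{(n)})\|_2^2\ge0$. The cross terms $\langle u_k,g^{(n)}_k(\chi_R v^{(n)})\rangle=\langle (g^{(n)}_k)^{-1}u_k,\chi_R v^{(n)}\rangle$ converge, by the local weak convergence \eqref{mag-shifts} tested against the compactly supported $\chi_R v^{(n)}$, to $\int\chi_R|v^{(n)}|^2$, while the square $\|\sum_n g^{(n)}_k(\chi_R v^{(n)})\|_2^2$ converges to $\sum_n\int\chi_R^2|v^{(n)}|^2$ by disjoint supports. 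Taking $\liminf_k$, then $R\to\infty$, gives $\liminf_k\|u_k\|_2^2\ge\sum_{n=0}^M\|v^{(n)}\|_2^2$, and $M\to\infty$ finishes it.

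The energy inequality \eqref{norms-mag} is the real difficulty, since the remainder does not vanish in the energy norm, forcing a lower-semicontinuity argument that now transports the magnetic gradient through the dislocations. Using \eqref{eq:commutation} in the form $(g^{(n)}_k)^{-1}\nabla_A u_k=\nabla_{A^{(n)}_k}\big((g^{(n)}_k)^{-1}u_k\big)$, the local weak convergence \eqref{mag-shifts}, the uniform convergence $A^{(n)}_k\to A^{(n)}_\infty$, and the compact embedding $H^{1}_{\mathrm{loc}}\hookrightarrow L^2_{\mathrm{loc}}$ together yield $\nabla_{A^{(n)}_k}\big((g^{(n)}_k)^{-1}u_k\big)\rightharpoonup\nabla_{A^{(n)}_\infty}v^{(n)}$ weakly in $L^2_{\mathrm{loc}}$. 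Expanding $\|\nabla_A\big(u_k-\sum_{n=0}^M g^{(n)}_k(\chi_R v^{(n)})\big)\|_2^2\ge0$ and using \eqref{eq:EAphasing} to rewrite each term as the $A^{(n)}_k$-energy in the dislocated frame, the cross terms with $\nabla_A u_k$ converge to $\pairing{v^{(n)}}{\chi_R v^{(n)}}_{A^{(n)}_\infty}$, the diagonal terms to $E_{A^{(n)}_\infty}(\chi_R v^{(n)})$ (uniform convergence of the potentials on the fixed support $B_{2R}$), and the mixed terms vanish by disjoint supports; passing to $\liminf_k$, then $R\to\infty$ and $M\to\infty$ as before, produces \eqref{norms-mag}. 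The main obstacle is precisely this last commutation: one must move the magnetic gradient past the phase-and-shift operators and then justify replacing each $A^{(n)}_k$ by its limit $A^{(n)}_\infty$, which is exactly where Remark~\ref{rem:Lipschitz} (uniform-on-bounded-sets convergence of the corrected potentials) and the gauge identities \eqref{eq:commutation}--\eqref{eq:EAphasing} are indispensable.
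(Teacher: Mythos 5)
Your proposal cannot be checked against a proof \emph{in this paper}, because the paper contains none: Theorem~\ref{thm:energies} is quoted from \cite{DT} (see the opening of Section~\ref{PD}), so the only benchmark is the argument in that reference and in the surrounding concentration-compactness literature. Measured against that, your reconstruction is correct and follows essentially the same route: reduction to finitely many profiles via the uniform, unconditional convergence of the series of translated moduli; disjointness of supports of truncated profiles forced by the separation condition \eqref{separates}; unitarity of the magnetic shifts on $L^2$; and expansion of nonnegative quadratic quantities combined with the dislocated weak convergence \eqref{mag-shifts}. Your treatment of the energy term --- moving $\nabla_A$ through $g^{(n)}_k$ via \eqref{eq:commutation}--\eqref{eq:EAphasing} and then replacing $A^{(n)}_k$ by $A^{(n)}_\infty$ using the locally uniform convergence of Remark~\ref{rem:Lipschitz}, with Rellich compactness handling the zero-order term $iA^{(n)}_k(g^{(n)}_k)^{-1}u_k$ --- is precisely the mechanism that makes the magnetic case work, and it is the same one used in \cite{DT}.

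Two remarks. First, you are right to flag the endpoint $p=2$: identity \eqref{eq:IBL} as printed (``$\forall p\in[2,2^*)$'') is false at $p=2$; for $A=0$ and the spreading sequence $u_k=k^{-N/2}\phi(\cdot/k)$ all profiles vanish while $\|u_k\|_2$ stays constant, so the closed bracket is a misprint for $(2,2^*)$ (which is what \cite{DT} states and what your argument proves), and at $p=2$ only the inequality \eqref{eq:IBL2} survives, exactly as you say. Second, your energy argument silently uses facts that deserve a line each: that $E_{A^{(n)}_\infty}(v^{(n)})<\infty$ (it follows by weak lower semicontinuity applied to $\nabla_{A^{(n)}_k}\bigl((g^{(n)}_k)^{-1}u_k\bigr)$); that the cutoff error $(\nabla\chi_R)v^{(n)}\to 0$ in $L^2$ as $R\to\infty$ requires $v^{(n)}\in L^2(\R^N)$, which is available only because you establish \eqref{eq:IBL2} beforehand; and that $(g^{(n)}_k)^{-1}u_k$ is bounded in $H^{1,2}(B)$ on every fixed ball --- a consequence of the linear bound \eqref{eq:Aineq} on the corrected potentials --- which is what legitimizes both the weak convergence and the compact embedding you invoke. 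These are routine completions, not gaps in the strategy.
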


\section{Applications}
In this section make the following assumption:
\begin{equation}
	\label{AV}
A\in \dot C^1(\R^N,\Lambda_1),\; V\in C(\R^N), \inf_{x \in \R^N} V(x) > 0, 
\end{equation}
and consider
the following functional on $H^{1,2}_A(\R^N)$.
\begin{equation}\label{eq:JAV}
J_{A,V}(u)\eqdef\int_{\R^N}\left(|(\nabla_A u(x)|^2+V(x)|u(x)|^2\right)\dx{x}.
\end{equation}
For a fixed $p\in (2,2^*)$ we consider a minimization problem 
\begin{equation}\label{eq:kAV}
	\kappa_{A,V}\eqdef\inf_{ u \in H^{1,2}_A(\R^N)\int_{\R^N}|u(x)|^p\dx{x}=1}J_{A,V}(u).
\end{equation}
A minimizer for this problem is also called \textit{ground state}.
 Given a sequence $Y = (y_k)_{k\in \N}\subset{\R^N}$, let 
 $A_\infty^{(Y)}$ be as in Remark~\ref{rem:Lipschitz}, let
 \begin{equation}
	V_\infty^{(Y)}(x)\eqdef \liminf V(x+y_k),
 \end{equation} 
 and let $J_\infty^{(Y)}$ be the functional \eqref{eq:JAV} with $A$ and $V$ repaced, respectively, by $A_\infty^{(Y)}$ and $V_\infty^{(Y)}$. Furthermore, define $\kappa_\infty^{(Y)}$ as the constant $\kappa_{A,V}$ with $J_{A,V}$ replaced by $J_\infty^{(Y)}$. 	\begin{lemma}\label{lemcc}
 	Assume \eqref{AV}.  Let $\Xi$ be a discretization of $\R^N$. If for every sequence $Y = (y_k)_{k\in \N}\subset{\Xi}$, 
 	\begin{equation}\label{penalty}
 		\kappa_\infty^{(Y)}>\kappa_{A,V},
 	\end{equation}
 	then the minimum in \eqref{eq:kAV} is attained and any minimizing sequence has a subsequence convergent to a minimizer.
 \end{lemma}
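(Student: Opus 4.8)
The plan is to prove this concentration-compactness lemma by applying the profile decomposition machinery of Theorem~\ref{thm:MagneticPD} and Theorem~\ref{thm:energies} to a minimizing sequence, and then using the penalty condition \eqref{penalty} to force all profiles except the principal one to vanish. Let $(u_k)_{k\in\N}$ be a minimizing sequence for $\kappa_{A,V}$, so that $\int_{\R^N}|u_k|^p\,\dx x=1$ and $J_{A,V}(u_k)\to\kappa_{A,V}$. First I would verify that $(u_k)$ is bounded in $H^{1,2}_A(\R^N)$: since $\inf V>0$ by \eqref{AV}, the functional $J_{A,V}$ controls both $E_A(u_k)$ and $\|u_k\|_2^2$, and boundedness follows immediately from $J_{A,V}(u_k)\to\kappa_{A,V}$. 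Passing to the subsequence furnished by the two structure theorems, I obtain profiles $v^{(n)}$, sequences $Y^{(n)}\subset\Xi$ with $y_k^{(0)}=0$, and the asymptotic identities \eqref{mag-shifts}--\eqref{BBasymptotics-mag}.

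The core of the argument is an energy comparison. From \eqref{eq:IBL} with the chosen $p$, the constraint gives $\sum_{n=0}^\infty\int_{\R^N}|v^{(n)}|^p\,\dx x=1$. For the functional side I need the analogue of \eqref{norms-mag} that incorporates the electric potential $V$. The kinetic part is handled directly by \eqref{norms-mag}, namely $\sum_{n}E_{A^{(n)}_\infty}(v^{(n)})\le\liminf_k E_A(u_k)$. For the potential part I would combine \eqref{eq:IBL} at $p=2$ (or \eqref{eq:IBL2}) with a Fatou-type estimate that replaces $V$ acting on the shifted profile $g^{(n)}_k v^{(n)}$ by $V^{(Y^{(n)})}_\infty$ acting on $v^{(n)}$; since $V^{(Y^{(n)})}_\infty(x)=\liminf_k V(x+y^{(n)}_k)$ is defined precisely as this lower limit, the profiles see at least the asymptotic potential. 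Assembling these pieces yields the decoupling inequality
\begin{equation}\label{eq:decouple}
\sum_{n=0}^\infty J^{(n)}(v^{(n)})\le\liminf_{k\to\infty}J_{A,V}(u_k)=\kappa_{A,V},
\end{equation}
where $J^{(0)}=J_{A,V}$ (recall $A^{(0)}_\infty=A$, $y^{(0)}_k=0$, so $V^{(Y^{(0)})}_\infty=V$) and for $n\ne 0$, $J^{(n)}=J^{(Y^{(n)})}_\infty$.

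Now I would extract the contradiction unless a single profile carries everything. By the definition of $\kappa_{A,V}$ and $\kappa^{(Y^{(n)})}_\infty$ as infima over the constraint $\int|u|^p=1$, each nonzero profile satisfies $J^{(n)}(v^{(n)})\ge\kappa^{(n)}(\int|v^{(n)}|^p)^{2/p}$ with $\kappa^{(0)}=\kappa_{A,V}$ and $\kappa^{(n)}=\kappa^{(Y^{(n)})}_\infty$ for $n\ne0$; here the exponent $2/p$ reflects the homogeneity of $J$ versus the $L^p$ constraint. Writing $t_n\eqdef\int_{\R^N}|v^{(n)}|^p\,\dx x$, so that $\sum_n t_n=1$ with $t_n\in[0,1]$, the estimate \eqref{eq:decouple} becomes $\sum_n\kappa^{(n)}t_n^{2/p}\le\kappa_{A,V}$. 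Since $2/p<1$ we have $t_n^{2/p}\ge t_n$, and the penalty hypothesis \eqref{penalty} gives $\kappa^{(n)}>\kappa_{A,V}$ for all $n\ne0$; together these force $\sum_{n}\kappa_{A,V}\,t_n\le\sum_n\kappa^{(n)}t_n^{2/p}\le\kappa_{A,V}$, and strict positivity of any $t_n$ with $n\ne0$ would make the inequality strict, a contradiction. Hence $t_n=0$ for every $n\ne0$, so $\int|v^{(0)}|^p=1$, and then feeding $t_0=1$ back in gives $J_{A,V}(v^{(0)})\le\kappa_{A,V}$, whence $v^{(0)}$ is a minimizer. The convergence of the minimizing sequence to $v^{(0)}$ then follows since all other profiles vanish and the remainder tends to zero in $L^p$ by \eqref{BBasymptotics-mag}.

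The main obstacle I anticipate is the potential-energy term: Theorem~\ref{thm:energies} as stated handles only the kinetic energy $E_A$ and the $L^p$ and $L^2$ norms, not the weighted quantity $\int V|u_k|^2$. The delicate point is justifying $\liminf_k\int V|u_k|^2\ge\sum_n\int V^{(Y^{(n)})}_\infty|v^{(n)}|^2$, which requires converting the almost-everywhere convergence of $(g^{(n)}_k)^{-1}u_k\rightharpoonup v^{(n)}$ together with the translate behaviour of $V$ into a Fatou argument that respects both the asymptotic splitting into orthogonal profiles and the continuity assumption on $V$; the unconditional, $k$-uniform convergence of the profile series in $H^{1,2}$ asserted in Theorem~\ref{thm:MagneticPD} is exactly what is needed to control cross terms and pass to the limit profile by profile.
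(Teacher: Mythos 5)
Your proposal is correct and follows essentially the same route as the paper's own proof: apply Theorems~\ref{thm:MagneticPD} and \ref{thm:energies} to a minimizing sequence, decouple the potential term by a Fatou-type estimate (the paper's \eqref{eq:IBL3}), combine with \eqref{norms-mag} and \eqref{eq:IBL} to get $\sum_n \kappa^{(n)}t_n^{2/p}\le\kappa_{A,V}$ with $\sum_n t_n=1$, and invoke \eqref{penalty} to force $t_0=1$, so that $v^{(0)}$ is a minimizer and the sequence converges to it. You in fact supply more detail than the paper (boundedness of the minimizing sequence, the strictness argument via $t_n^{2/p}\ge t_n$, and the sketch of the Fatou step), which the paper compresses into ``we easily get'' and ``can hold only if.''
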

 \begin{proof}
 	Let $(u_k)$ be a minimization sequence for \eqref{eq:kAV}. Applying Theorem \ref{thm:MagneticPD}, and using  the notation $V_\infty^{(n)}$ for  $V_\infty^{Y^(n)}$ (here and further on in the similar context),
 	we easily get
 	\begin{equation}\label{eq:IBL3}
 		\liminf_{k\to\infty} \int_{\R^N}V|u_k|^2\dx{x}\ge\sum_{n=0}^\infty\int_{\R^N}V_\infty^{(n)}|v^{(n)}|^2\dx{x},
 	\end{equation}
 	which, combined with \eqref{norms-mag}, yields
 	\begin{equation}\label{eq:nrj}
 		\kappa_{A,V}=\liminf_{k\to\infty} J_{A,V}(u_k)\ge
 		\sum_{n=0}^\infty J_\infty^{(n)}(v^{(n)}).
 	\end{equation}
 	At the same time, setting $t_n=\int_{\R^N}|u|^p\dx{x}$, we have from \eqref{eq:IBL} 
 	\[
 	\sum_{n=0}^\infty t_n=1. 
 	\]
 	Then from \eqref{eq:nrj} follows
 	\[
 	\kappa_{A,V}\ge \sum_{n=0}^\infty \kappa_\infty^{(n)}t^\frac2p,
 	\]
 	which in view of \eqref{penalty} can hold only if $t_0=1$ and for all $n\ge 1$, $t_n=0$. This impies that $J_{A,V}(v^{(0)})=\kappa_{A,V}$
 	and therefore $u_k\to v^{(0)}$ in $H_A(\R^N)$ and $v^{(0)}$ is a minimizer.
 \end{proof}
 This lemma allows us to reduce the proof of existence of ground state below to verification of \eqref{penalty}.
 
 \begin{theorem}\label{thm:2}
 	Assume \eqref{AV}.  Let $\Xi$ be a discretization of $\R^N$. If for every divergent sequence $Y = (y_k)_{k\in \N}\subset{\Xi}$,
 	\begin{description}
 		\item[(A)] the problem \eqref{eq:kAV} with $A$, $V$ replaced, respectively with $A_\infty^{(Y)}$ $V_\infty^{(Y)}$ has a ground state
 		$v$, and
 		\item[(B)] $|A_\infty^{(Y)}(x)|^2 +2A_\infty^{(Y)}(x)\cdot \mathrm{Im}\frac{\nabla v(x)}{v(x)} +V_\infty^{(Y)}(x)<$ \\$< 
 		|A(x)|^2 +2A(x)\cdot \mathrm{Im}\frac{\nabla v(x)}{v(x)} +V(x)$, whenever $v(x) \neq 0$. 
 	\end{description}
 	then the minimum in \eqref{eq:kAV} is attained and any minimizing sequence has a subsequence convergent to a minimizer in $H^{1,2}_A(\R^N)$.
 \end{theorem}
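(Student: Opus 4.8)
The plan is to deduce everything from Lemma~\ref{lemcc}: it suffices to verify the penalty inequality $\kappa_\infty^{(Y)}>\kappa_{A,V}$ for every \emph{divergent} sequence $Y=(y_k)_{k\in\N}\subset\Xi$. These are exactly the sequences that can occur as $Y^{(n)}$ with $n\ge1$ in Theorem~\ref{thm:MagneticPD}: putting $m=0$ and $y_k^{(0)}=0$ in \eqref{separates} forces $|y_k^{(n)}|\to\infty$, while the profile $n=0$ carries $\kappa_\infty^{(0)}=\kappa_{A,V}$ and requires no penalty. So I fix a divergent $Y$ and let $v$ be the ground state of the problem at infinity supplied by hypothesis~(A), normalized so that $\int_{\R^N}|v|^p\,\dx x=1$; then $J_\infty^{(Y)}(v)=\kappa_\infty^{(Y)}$, and the whole point is to exhibit $v$ (unshifted) as a competitor in the original problem with strictly smaller energy.

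First I would expand the magnetic gradient pointwise: for any real form $A$ and any $u$,
\[
|\nabla_A u|^2=|\nabla u|^2+2A\cdot\mathrm{Im}(\bar u\,\nabla u)+|A|^2|u|^2,
\]
and since $\mathrm{Im}(\bar u\,\nabla u)=|u|^2\,\mathrm{Im}\frac{\nabla u}{u}$ on $\{u\ne0\}$, this rewrites
\[
J_{A,V}(v)=\int_{\R^N}|\nabla v|^2+\int_{\R^N}\Big(|A|^2+2A\cdot\mathrm{Im}\tfrac{\nabla v}{v}+V\Big)|v|^2,
\]
with the analogous expression for $J_\infty^{(Y)}(v)$ obtained by replacing $A,V$ with $A_\infty^{(Y)},V_\infty^{(Y)}$. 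Subtracting, the Dirichlet terms cancel and I get
\[
J_{A,V}(v)-J_\infty^{(Y)}(v)=\int_{\R^N}\Big[\big(|A|^2+2A\cdot\mathrm{Im}\tfrac{\nabla v}{v}+V\big)-\big(|A_\infty^{(Y)}|^2+2A_\infty^{(Y)}\cdot\mathrm{Im}\tfrac{\nabla v}{v}+V_\infty^{(Y)}\big)\Big]|v|^2 .
\]
Hypothesis~(B) is precisely the statement that the bracketed integrand is strictly negative wherever $v\ne0$; weighted by $|v|^2$ it vanishes on $\{v=0\}$, and since $v\not\equiv0$ the set $\{v\ne0\}$ has positive measure. (As a sanity check, in the non-magnetic case $A=A_\infty^{(Y)}=0$ this collapses to the classical Lions/Arioli--Szulkin condition $V<V_\infty^{(Y)}$ quoted in the introduction.)

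It then remains to check that $v$ is genuinely admissible for the original problem. The constraint $\int|v|^p=1$ holds by normalization, and the one-sided bound just obtained gives $J_{A,V}(v)\le J_\infty^{(Y)}(v)<\infty$, so in particular $E_A(v)<\infty$ and $v\in H^{1,2}_A(\R^N)$. Consequently
\[
\kappa_{A,V}\le J_{A,V}(v)<J_\infty^{(Y)}(v)=\kappa_\infty^{(Y)},
\]
the strict inequality coming from the strictly negative integrand on the positive-measure set $\{v\ne0\}$. Since $Y$ was an arbitrary divergent sequence in $\Xi$, this is exactly the penalty hypothesis of Lemma~\ref{lemcc}, which then yields that the infimum in \eqref{eq:kAV} is attained and that every minimizing sequence has a subsequence converging to a minimizer in $H^{1,2}_A(\R^N)$.

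I expect the one delicate point to be this admissibility/integrability step: membership of $v$ in $H^{1,2}_{A_\infty^{(Y)}}(\R^N)$ does not transfer a priori to $H^{1,2}_A(\R^N)$ when the two potentials differ at infinity, and the cross term $2A\cdot\mathrm{Im}\frac{\nabla v}{v}$ is only defined off the nodal set. The comparison in (B), used as a pointwise \emph{upper} bound on the original integrand, is exactly what forces the transfer, while the trivial lower bound $J_{A,V}\ge0$ (guaranteed by $E_A\ge0$ and $\inf V>0$ in \eqref{AV}) rules out the difference integral diverging to $-\infty$; the elementary estimates \eqref{eq:H1ABound}--\eqref{eq:H1Bound} together with \eqref{AV} control the remaining local integrability. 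Everything else is the quadratic expansion above and the scaling/minimality already packaged into Lemma~\ref{lemcc}.
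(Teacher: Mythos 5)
Your proof is correct and takes essentially the same route as the paper: both arguments reduce to Lemma~\ref{lemcc} and verify the penalty inequality by testing $J_{A,V}$ with the infinity ground state $v$, expanding $|\nabla_A v|^2$ pointwise so that all terms independent of the potentials cancel and condition \textbf{(B)} yields $\kappa_{A,V}\le J_{A,V}(v)<J_\infty^{(Y)}(v)=\kappa_\infty^{(Y)}$ (your expansion $|\nabla v|^2+2A\cdot\mathrm{Im}(\bar v\,\nabla v)+|A|^2|v|^2$ is a trivial rearrangement of the paper's $|\nabla|v||^2+\left|A+\mathrm{Im}\frac{\nabla v}{v}\right|^2|v|^2$). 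One remark: you read \textbf{(B)} as saying the quantity at the original potentials is strictly \emph{smaller} than at infinity, which is the direction actually used in the paper's own proof and matches Theorem~\ref{thm:1}(ii), even though the typeset inequality in the statement of \textbf{(B)} points the other way (evidently a sign typo in the paper); your explicit treatment of strictness on the positive-measure set $\{v\neq 0\}$ and of the admissibility $v\in H^{1,2}_A(\R^N)$ tightens two steps the paper leaves implicit.
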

 \begin{proof}
 	Let us use the following identity which holds for all $x$ such that $u(x) \neq 0$:
 	\begin{align*}
 		|\nabla_A u|^2 &= |u|^2 \left |\frac{\nabla u}u +iA \right |^2
 		\\
 		&= |u|^2 \left |\frac{\nabla |u|}{|u|} + i\mathrm{Im}\frac{\nabla u}u+iA \right |^2\\
 		&=\left|\nabla| u|\right|^2 +  \left | \mathrm{Im}\frac{\nabla u}u+A \right |^2|u|^2
 	\end{align*}
 	When $u(x) = 0$, $	|\nabla_A u(x)| = |\nabla u(x)|$. 
 	Using condition \textbf{(B)} we have 
 	\begin{align*}
 		\nonumber \kappa_\infty^{(Y)} & = J_\infty^{(Y)}(v)\\
 		\nonumber & = \int_{\R^N} \left (|\nabla |v||^2 + \left|A_\infty^{(Y)}+\mathrm{Im}\frac{\nabla v}{v}\right |^2|v|^2 + V_\infty^{(Y)}|v|^2\right )\dx{x} \\
 		&= \int_{v \neq 0} \left (|\nabla |v||^2 + \left|A_\infty^{(Y)}+\mathrm{Im}\frac{\nabla v}{v}\right |^2|v|^2 + V_\infty^{(Y)}|v|^2\right)\dx{x} +\int_{v = 0} |\nabla v|^2 \dx{x} \\
 		& \ge \int_{v \neq0} \left(|\nabla |v||^2 +  \left|A+\mathrm{Im}\frac{\nabla v}{v}\right|^2|v|^2 \dx{x}+ V|v|^2\right)\dx{x} +\int_{v = 0} |\nabla v|^2\dx{x} \\
 		&= J_{A,V}(v) \ge \kappa_{A,V}. 
 	\end{align*}
 	The assertion follows from Lemma~\ref{lemcc}.
 \end{proof}

\begin{remark}
	Conditions \eqref{AV} can be relaxed, taking into account that asymptotic values of the electric and the magnetic potentials, as well as concentration profiles $v^{(n)}$ in Theorem~\ref{thm:MagneticPD} are independent of the values of $A$ and $V$ in a fixed bounded set. One can assume instead that in a ball of radius $R$, centered at the origin, $A \in L^N(B_R(0))$ and $V \in L^{N/2}(B_R(0))$ when $N \ge 3$.  When $N=2$ we may require $A \in L^{2+\epsilon}(B_R(0))$ and $V \in L^{1+\epsilon}(B_R(0))$ for some $\epsilon > 0$. More generally it suffices to have $|A|^2$ and $V$ in the suitable class of continuous multiplier operators, for example the Hardy potential $V(x)=\frac{a}{|x|^2}$ and, for $N=2$, the Aharonov-Bohm potential  $A(x)=a\frac{(x_1,-x_2}{x_1^2+x_2^2}$, $a \in \R$.
\end{remark}
\begin{remark}
The theorem above takes a simpler form if we assume that the ground state for every problem at infinity is positive. This is indeed the case if the magnetic field at infinty is constant ($B(x)=B\in \Lambda_2$, $A=\frac12Bx$) and is sufficiently small, see e.g. \cite[Proposition 5.7]{BonNysSch}.
\end{remark}

 	\begin{theorem}\label{thm:1}
 	Assume \eqref{AV}.  Let $\Xi$ be a discretization of $\R^N$. If for every divergent sequence $Y = (y_k)_{k\in \N}\subset{\Xi}$,
 	\begin{description}
 		\item[(i)] 	the problem \eqref{eq:kAV} with $A$, $V$ replaced, respectively, by $A_\infty^{(Y)}$ and $V_\infty^{(Y)}$ has a positive ground state, and condition \textbf{(B)} holds, which in this case reads as
 		\item[(ii)] $|A(x)|^2 + V(x) < | A_\infty^{(Y)}(x)|^2+ V_\infty^{(Y)}(x)$, $x\in\R^N$,
 	\end{description}
 	then the minimum in \eqref{eq:kAV} is attained and any minimizing sequence has a subsequence convergent to a minimizer.
 \end{theorem}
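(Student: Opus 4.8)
The plan is to obtain Theorem~\ref{thm:1} as an immediate specialization of Theorem~\ref{thm:2}: the additional hypothesis that the ground state at infinity is \emph{positive} is exactly what is needed to annihilate the cross terms in condition \textbf{(B)} and collapse it to the cleaner inequality \textbf{(ii)}.

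The first step is the elementary observation that, if for a given divergent sequence $Y\subset\Xi$ the problem at infinity admits a positive ground state $v$, then $v$ is real-valued with $v(x)>0$, so its gradient $\nabla v$ is real and the logarithmic gradient $\frac{\nabla v}{v}$ is a real vector field; hence $\mathrm{Im}\frac{\nabla v}{v}=0$ wherever $v\neq 0$. That such a positive minimizer exists when the magnetic field at infinity is constant and sufficiently small is recorded in the remark following Theorem~\ref{thm:2}; in general it is precisely what hypothesis \textbf{(i)} postulates. For the pointwise identity to be meaningful one only needs $v$ to possess a gradient a.e.\ on $\{v\neq 0\}$, which follows from $v\in H^{1,2}_{A_\infty^{(Y)}}(\R^N)$ together with elliptic regularity for the Euler--Lagrange equation satisfied by the constrained minimizer (the coefficients $A_\infty^{(Y)}$ being Lipschitz by Remark~\ref{rem:Lipschitz} and $V_\infty^{(Y)}$ bounded).

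The second step is simply to insert $\mathrm{Im}\frac{\nabla v}{v}=0$ into condition \textbf{(B)} of Theorem~\ref{thm:2}. The two linear-in-$A$ terms $2A_\infty^{(Y)}(x)\cdot\mathrm{Im}\frac{\nabla v(x)}{v(x)}$ and $2A(x)\cdot\mathrm{Im}\frac{\nabla v(x)}{v(x)}$ vanish identically on $\{v\neq 0\}$, so \textbf{(B)} reduces there to
\[
|A(x)|^2+V(x)<|A_\infty^{(Y)}(x)|^2+V_\infty^{(Y)}(x),
\]
which is exactly inequality \textbf{(ii)}. Since \textbf{(ii)} is assumed for all $x\in\R^N$, it holds in particular on the set $\{v\neq 0\}$ over which Theorem~\ref{thm:2} requires \textbf{(B)}. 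Thus hypotheses \textbf{(A)} and \textbf{(B)} of Theorem~\ref{thm:2} are satisfied for every divergent sequence $Y\subset\Xi$, and the conclusion of Theorem~\ref{thm:1} — attainment of the infimum \eqref{eq:kAV} together with subsequential convergence of every minimizing sequence — is literally the conclusion delivered by Theorem~\ref{thm:2}.

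There is no serious obstacle in this argument: the whole content is the single algebraic simplification above, and the underlying compactness mechanism has already been isolated in Lemma~\ref{lemcc} and packaged in Theorem~\ref{thm:2}. The only point deserving a word of care is the passage from ``positive'' to $\mathrm{Im}\frac{\nabla v}{v}=0$, which is where hypothesis \textbf{(i)} does its work; once granted, the reduction to Theorem~\ref{thm:2} is complete and nothing further need be verified.
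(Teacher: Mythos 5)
Your proposal is correct and coincides with the paper's own (implicit) argument: Theorem~\ref{thm:1} is stated without a separate proof precisely because it is the specialization of Theorem~\ref{thm:2} in which positivity of the ground state at infinity forces $\mathrm{Im}\frac{\nabla v}{v}=0$ on $\{v\neq 0\}$, so the cross terms in \textbf{(B)} vanish and \textbf{(B)} collapses to \textbf{(ii)}, after which Lemma~\ref{lemcc} (packaged in Theorem~\ref{thm:2}) does the rest. One caveat worth recording: condition \textbf{(B)} as printed has the two sides of its inequality in the opposite order from what the proof of Theorem~\ref{thm:2} actually uses (the displayed estimate there, like the penalty condition \eqref{penalty} of Lemma~\ref{lemcc}, requires the integrand at infinity to dominate the local one), so your reduction of \textbf{(B)} to \textbf{(ii)} matches the intended, corrected reading of \textbf{(B)} rather than its literal misprinted form --- an inconsistency in the paper, not a gap in your argument.
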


\begin{remark} 
	What is the point of conditioning the existence of a ground state by the existence of other ground states?  The answer lies in the fact that existence of a ground states is already known for  some magnetic fields that may appear as limits at infinity of other magnetic fields.  In particular the existence of a ground state is known in the case of periodic magnetic fields \cite{ArioliSzulkin} (which includes constant magnetic fields \cite{LionsMag}), or under the assumption \eqref{eq:rostock} in \cite{SchinTinMag}. 
\end{remark}

\section{Critical exponent problem}
Assume that $N\ge 3$ and let
\begin{eqnarray}
\label{eq:Ahab}
	J(u)\eqdef\int_{\R ^{N}}\left(
	\bigl| \nabla_Au \bigr| ^{2}-\frac{\mu}{x_1^2}\bigl|u \bigr| ^{2}\right)
	\dx{x}\\
\nonumber	\text{ with } A(x)=\lambda\frac{(x_1,-x_2,0,\dots,0)}{x_1^2+x_2^2}, \quad \lambda,\mu\in\R
\end{eqnarray}

\begin{theorem} Assume that $\lambda^2\le\mu<\frac14$. Then the  minimum in the problem
\begin{equation}\label{eq:k*}
	\kappa\eqdef \inf_{u \in \dot H^{1,2}_A(\R^N):\int_{\R^N}|u(x)|^{p^*}\dx{x}=1}J(u)
\end{equation}
is attained.
\end{theorem}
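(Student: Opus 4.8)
The plan is to prove coercivity of $J$ first and then obtain attainment through the concentration–compactness analysis built on the critical profile decomposition of the Appendix, the decisive ingredient being a strict energy inequality furnished by the hypothesis $\lambda^2\le\mu$. First I would show that $J$ is coercive and equivalent to $E_A$ on the natural form domain (those $u\in\dot H^{1,2}_A(\R^N)$ for which $\int_{\R^N}|u|^2/x_1^2\,\dx{x}<\infty$). Applying the one–dimensional Maz'ya–Hardy inequality $\int_{\R}|w|^2/t^2\,\dx{t}\le 4\int_{\R}|w'|^2\,\dx{t}$ in the $x_1$–variable to $|u|$, together with the diamagnetic inequality \eqref{eq:diamag}, yields $\int_{\R^N}|u|^2/x_1^2\,\dx{x}\le 4E_0(|u|)\le 4E_A(u)$, so that for $0\le\mu<\tfrac14$ one has the two–sided bound
\[
(1-4\mu)\,E_A(u)\le J(u)\le E_A(u).
\]
Hence $J$ is an equivalent square norm, $\kappa\in(0,\infty)$, and every minimizing sequence is bounded in $\dot H^{1,2}_A(\R^N)$. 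I would record at this stage that $J$ and the constraint are invariant under the dilations $u\mapsto t^{(N-2)/2}u(t\,\cdot)$ and under translations in $x_3,\dots,x_N$; these are the only sources of non–compactness.

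Second, and this is where $\lambda^2\le\mu$ enters, I would pin down $\kappa$ by comparison with the magnetic–free Hardy–Sobolev–Maz'ya constant $\kappa_H$ of $E_0-\mu\int|u|^2/x_1^2$. Using the identity $|\nabla_A u|^2=|\nabla|u||^2+\bigl|A+\mathrm{Im}\tfrac{\nabla u}{u}\bigr|^2|u|^2$ from the proof of Theorem~\ref{thm:2} and the pointwise bound $|A|^2=\frac{\lambda^2}{x_1^2+x_2^2}\le\frac{\lambda^2}{x_1^2}\le\frac{\mu}{x_1^2}$, the diamagnetic lower bound gives
\[
J(u)\ge E_0(|u|)-\mu\int_{\R^N}\frac{|u|^2}{x_1^2}\,\dx{x}\ge\kappa_H\,\|u\|_{2^*}^2,
\]
so $\kappa\ge\kappa_H$. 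For the matching upper bound I would test with shrinking Hardy–Sobolev–Maz'ya bubbles centered on $\{x_1=0\}$ away from the flux line $\{x_1=x_2=0\}$: there the magnetic contribution is $O(\varepsilon^2)$ while the Hardy and Dirichlet terms are scale–invariant, so $J\to\kappa_H$ and $\kappa\le\kappa_H$. Thus $\kappa=\kappa_H$, and since $\mu>0$ the effective potential $\frac{\mu}{x_1^2}-|A|^2$ is nonnegative (precisely because $\lambda^2\le\mu$) and strictly positive off $\{x_2=0\}$, which forces the strict inequality $\kappa=\kappa_H<S$, $S$ being the best Sobolev constant (equal to the magnetic one by \eqref{eq:diamag}).

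Third, I would run concentration–compactness on a minimizing sequence $(u_k)$ through the critical profile decomposition. The $L^{2^*}$–mass and the energy split over the profiles, each concentrating either at a regular point (limiting value $S$), on the singular hyperplane off the flux line (limiting value $\kappa_H$, the magnetic term being lower order), or on the scale–invariant structure through the origin and the flux line (limiting value $\kappa$). Strict subadditivity of $t\mapsto t^{2/2^*}$ combined with $\kappa<S$ forbids any regular–point profile from carrying mass, so all of the mass is concentrated on the Hardy–Sobolev–Maz'ya structure; normalizing the scale and translation of the surviving profile and passing to the limit produces the candidate minimizer.

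The step I expect to be the main obstacle is precisely this hyperplane profile, whose limiting functional $E_0-\mu\int|u|^2/x_1^2$ is itself invariant under translations in $x_2,\dots,x_N$, so that the strict inequality alone does not prevent a minimizing sequence from sliding off to infinity along the singular set. The resolution I would pursue is to invoke the attainment of the limiting Hardy–Sobolev–Maz'ya problem $\kappa_H$ (available for $0<\mu<\tfrac14$ in this codimension–one geometry): its extremal $W\ge0$ vanishes on $\{x_1=0\}$, and one recovers a genuine minimizer of $J$ by attaching to $W$ the Aharonov–Bohm gauge phase, $u=e^{i\theta}W\in\dot H^{1,2}_A(\R^N)$ with $J(u)=\kappa_H=\kappa$. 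Verifying that this phase correction is admissible — single–valuedness across $\{x_1=0\}$, where $W$ vanishes, and the resulting saturation of the diamagnetic inequality, both of which hinge on the structure of $A$ and on $\lambda^2\le\mu$ — is, I anticipate, the delicate technical heart of the argument.
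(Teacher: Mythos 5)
Your proposal is not the paper's proof, and the comparison cuts both ways. The paper's argument is short: apply the critical profile decomposition (Theorem~\ref{thm:PD*}), assert that the rescaled potentials $A(\cdot+2^{j_k}y_k)$, $V(\cdot+2^{j_k}y_k)$ converge either to $(A,V)$ up to translation and a fixed scaling (limiting constant $\kappa$) or to $(0,0)$ (limiting constant $S_N$), prove $S_N>\kappa$ by testing with the real Talenti minimizer (for which the magnetic cross term vanishes and $|A|^2<\mu/x_1^2$ a.e.), and conclude by strict subadditivity of $t\mapsto t^{2/2^*}$. You do something genuinely different: you identify $\kappa$ with the non-magnetic Hardy--Sobolev--Maz'ya constant $\kappa_H$, classify the profiles into three regimes rather than two, and settle the hyperplane regime by gauge-lifting an HSM extremal. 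Your third regime is a real observation that the paper's dichotomy omits: if the translation parameters $2^{j_k}y_k$ stay bounded in $x_1$ but diverge in $x_2$, the limiting potentials are $A^{(n)}=0$ yet $V^{(n)}$ a nonzero translate of $-\mu/x_1^2$, so limiting problems of exactly HSM type do occur; on this point your analysis is more faithful to the geometry of \eqref{eq:Ahab} than the paper's own proof.

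The gaps are in how you close the argument, and they are not technicalities. First, the decisive step ``invoke the attainment of $\kappa_H$, available for $0<\mu<\tfrac14$'' cannot be treated as an outside citation: by your own identity $\kappa=\kappa_H$ and the diamagnetic inequality \eqref{eq:diamag} (which makes $|u|$ an HSM minimizer whenever $u$ minimizes \eqref{eq:k*}), attainment of $\kappa_H$ is essentially \emph{equivalent} to the theorem being proved, so as written the proposal reduces the statement to itself; moreover, attainment of the codimension-one HSM constant is a delicate, dimension-sensitive result, not folklore. The repair lies inside your own scheme: once strict subadditivity leaves a single profile $w^{(n_0)}$ with full mass, the inequalities \eqref{eq:normSob*}, \eqref{eq:2*}, \eqref{eq:2} give $J^{(n_0)}(w^{(n_0)})\le\kappa$ with $\int|w^{(n_0)}|^{2^*}\dx{x}=1$, so an HSM-type profile is automatically an HSM minimizer and no external attainment result is needed; what then remains is exactly your gauge-lifting step, which is sound in principle (the Aharonov--Bohm potential is curl-free on each simply connected half-space $\{\pm x_1>0\}$ carrying the extremal) but still requires the approximation argument showing $e^{i\psi}W\in\dot H^{1,2}_A(\R^N)$. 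Second, your justification of $\kappa=\kappa_H<S_N$ --- nonnegativity of the effective potential $\mu/x_1^2-|A|^2$ --- proves nothing: $\kappa_H$ and $S_N$ are both non-magnetic constants, so no property of $A$ can separate them. You need either the paper's Talenti comparison or a bubble expansion for $\kappa_H$ itself, where the Hardy gain of order $\mu(\epsilon/d)^2$ beats the cutoff cost of order $(\epsilon/d)^{N-2}$ only when $N\ge4$, dimension $N=3$ being the hard borderline case. Without the strict inequality $\kappa<S_N$, the Sobolev-type profiles cannot be discarded and your scheme stalls at its first concentration step.
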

\begin{proof}
By the diamagnetic and the Hardy inequalities $J(u)$ defines an equivalent  $\dot H^{1,2}_A(\R^N)$-norm, so that $\kappa>0$. Let $(u_k)$ be a minimizing sequence for \eqref{eq:k*} and consider the subsequence provided by Theorem~\ref{thm:PD*}.
Consider the action of scaling on the electric potential V(x)=-$\frac{\mu}{x_1^2}$:
\begin{equation}
	\int_{\R ^{N}}V(x)|gu|^2\dx{x}=\int_{\R ^{N}}2^{-2j}V(2^{j}\cdot+y)|u|^2\dx{x},
\end{equation}
which in our particular case yields the value of the rescaled electric potential as $V(\cdot+2^jy)$. Similar calculation gives our rescaled potential equal $A(\cdot+2^jy)$. Calculation of their respective limit values yields either $0$ or, respectively $A,V$ possibly up to a fixed scaling, which can be eliminated by redefining the respective concentration profile $w^{(n)}$ by an inverse rescaling.
Let us write \eqref{eq:2*} in the form 
$
\sum_{n\in \N }t_n=1, t_n=\int_{\R ^{N}}
\bigl|w^{(n)} \bigr| ^{2^*}
\dx{x} .$
Then from \eqref{eq:normSob*} follows
\begin{equation}\label{eq:energy*}
\sum_{n\in\N} \kappa_nt_n^{2/2^*}\le \kappa,
\end{equation}
where $\kappa_n$ has one of two values, $\kappa$ or the Sobolev constant $S_N$, which under the assumption $\lambda^2\le\mu$ is greater than $\kappa$. Indeed, with $v$ denoting the Talenti minimizer,
\[
S_N=\int_{\R^N}|\nabla v|^2\dx{x} = \int_{\R^N}\left(|\nabla_A v|^2-|A(x)|^2v^2\right)\dx{x}>J(v)\ge\kappa.
\]

Consequently, \eqref{eq:energy*} can hold only if all $\kappa_n$ but one are zero, and the remaining $\kappa_{n_0}$ equals $\kappa$. Then the corresponding $w^{(n_0)}$ (after the translation that makes $A^{(n_0)}=A$ and $V^{(n_0)}=V$) is a minimizer for \eqref{eq:k*}.
\end{proof}

\section*{Appendix}
In homogeneous Sobolev spaces equipped with the group of shifts and dilations, one has the following profile decomposition of Solimini which we quote in a slightly refined version of \cite[Theorem 4.6.4]{ccbook}.

\begin{theorem}
	[Sergio
	Solimini,  \cite{Sol}]\label{thm:PDsob}%
	Let $(u_{k})$ be a bounded sequence in $\dot{H}^{m,p}(\R ^{N})$,
	$m\in \N $, $1<p<N/m$. Then it has a renamed subsequence and there exist
 sequences of isometries on $\dot{H}^{m,p}(\R ^{N})$, $(g_{k}^{(n)})_{k\in \N })_{n\in \N }$,
and functions	$w^{(n)}\in \dot{H}^{m,p}(\R ^{N})$, such that $g_{k}^{(n)}u\eqdef 2^{
		\frac{N-mp}{p}j_{k}^{(n)}}u(2^{j_{k}^{(n)}}(\cdot -y_{k}^{(n)}))$,
	$j_{k}^{(n)}\in \Z $, $y_{k}^{(n)}\in \R ^{N}$, $n\in\N$, with
	%
	%e4.20 #&#
	\begin{equation}
	%	\label{eq:profileHm}
	 j_{k}^{(0)}=0, y_{k}^{(0)}=0,		\bigl| j_{k}^{(n)}-j_{k}^{(m)} \bigr
		| +\bigl(2^{j_{k}^{(n)}}+2^{j
			_{k}^{(m)}}\bigr) \bigl|
		y_{k}^{(n)}-y_{k}^{(m)} \bigr| \to
		\infty ,m\neq n,
	\end{equation}
	such that $[g_{k}^{(n)}]^{-1}u_{k}\rightharpoonup w^{(n)}$ in
	$\dot{H}^{m,p}(\R ^{N})$,
	%
	%e4.21 #&#
	\begin{equation}
		%\label{eq:B-Basymp*}
		 u_{k}-\sum_{n\in \N }
		g_{k}^{(n)}w^{(n)}\to 0 \text{in } L^{p_{m}
			^{*}}
		\bigl(\R ^{N}\bigr),
	\end{equation}
	the series $\sum_{n\in \N } g_{k}^{(n)}w^{(n)}$ converges in
	$\dot{H}^{m,p}(\R ^{N})$ unconditionally and uniformly in $k$, and
	%
	%e4.22 #&#
	\begin{equation}
		%\label{eq:normSob*} 
		\sum_{n\in \N }\int_{\R ^{N}}
		\bigl| \nabla ^{m}w^{(n)} \bigr| ^{p}
		\dx{x} \le \liminf \int_{\R ^{N}} \bigl| \nabla
		^{m}u_{k} \bigr| ^{p}\dx{x} .
	\end{equation}
\end{theorem}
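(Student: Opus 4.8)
The plan is to realize the statement as an instance of the abstract profile decomposition for a bounded sequence in a uniformly convex Banach space equipped with a group of isometries, supplying the one analytic ingredient special to $\dot H^{m,p}$, namely cocompactness of the critical embedding. First I would fix the functional-analytic setting. The space $X=\dot H^{m,p}(\R^N)$ with norm $\|u\|=\|\nabla^m u\|_{L^p}$ is uniformly convex and uniformly smooth for $1<p<\infty$, so weak convergence and the iterated Brezis--Lieb lemma are available. The maps $g_{j,y}u=2^{\frac{N-mp}{p}j}u(2^j(\cdot-y))$, $j\in\Z$, $y\in\R^N$, form a group $\cD$ of bijective linear isometries of $X$ (the exponent $\frac{N-mp}{p}$ being exactly the one that preserves $\|\nabla^m\cdot\|_{L^p}$), and the same rescaling preserves the $L^{p_m^*}$ norm, $p_m^*=\frac{Np}{N-mp}$.

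The decisive step is to prove that the embedding $X\hookrightarrow L^{p_m^*}(\R^N)$ is $\cD$-cocompact: if $(u_k)$ is bounded in $X$ and $g_k^{-1}u_k\rightharpoonup 0$ in $X$ for every sequence $(g_k)\subset\cD$, then $u_k\to 0$ in $L^{p_m^*}$. I would deduce this from a scale-invariant refinement of the Sobolev inequality of the form $\|u\|_{L^{p_m^*}}\le C\|u\|_X^{1-\theta}\,\|u\|_\flat^{\theta}$ for some $\theta\in(0,1)$, where $\|\cdot\|_\flat$ is a weaker dilation-and-translation invariant (Besov/Morrey-type) norm built from dyadic blocks. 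The point is that a nonvanishing $\flat$-norm forces, after extracting a dyadic scale $2^{j_k}$ and a center $y_k$, a nonzero weak limit of $g_k^{-1}u_k$, contradicting $\cD$-weak nullity; hence $\|u_k\|_\flat\to 0$ and the interpolation bound yields $\|u_k\|_{L^{p_m^*}}\to 0$.

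With cocompactness in hand I would run the standard iterative extraction. Set $w^{(0)}=\wlim u_k$ and $g_k^{(0)}=\mathrm{id}$ (so $j_k^{(0)}=0$, $y_k^{(0)}=0$), and put $u_k^{(1)}=u_k-w^{(0)}$. At each stage, if the current remainder is $\cD$-weakly null I stop, the remainder being $L^{p_m^*}$-null by cocompactness; otherwise there is a sequence $g_k^{(n)}\in\cD$ and a nonzero profile $w^{(n)}=\wlim [g_k^{(n)}]^{-1}u_k^{(n)}$ capturing a definite amount of norm, which I subtract after applying $g_k^{(n)}$. The iterated Brezis--Lieb lemma turns each subtraction into an exact splitting of the $p$-th power of the norm in the limit, giving the Bessel inequality $\sum_n\|w^{(n)}\|_X^p\le\liminf\|u_k\|_X^p$; summability then forces $\|w^{(n)}\|_X\to 0$, so the profiles exhaust the sequence and the final remainder is $\cD$-weakly null, hence $L^{p_m^*}$-null.

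Finally I would verify the two structural assertions. The asymptotic orthogonality $[g_k^{(m)}]^{-1}g_k^{(n)}\rightharpoonup 0$ of the concentration operators, which is what keeps the successive weak limits consistent, translates into the explicit divergence condition $|j_k^{(n)}-j_k^{(m)}|+(2^{j_k^{(n)}}+2^{j_k^{(m)}})|y_k^{(n)}-y_k^{(m)}|\to\infty$ for $m\neq n$, since this quantity is precisely the natural distance on the dilation-translation group and its divergence is equivalent to operator weak nullity of $[g_k^{(m)}]^{-1}g_k^{(n)}$. The same orthogonality, combined with summability of $\|w^{(n)}\|_X^p$, gives unconditional convergence of $\sum_n g_k^{(n)}w^{(n)}$ in $X$ uniformly in $k$, and then $u_k-\sum_n g_k^{(n)}w^{(n)}\to 0$ in $L^{p_m^*}$. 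I expect the cocompactness lemma to be the main obstacle: establishing the refined Sobolev inequality and extracting the correct scale and center from a nonvanishing weak norm is the only genuinely analytic part, the remainder being the abstract iteration machinery valid in any uniformly convex dislocation space.
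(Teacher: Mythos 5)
You have reconstructed the proof correctly, and since the paper itself offers no proof of this theorem --- it is quoted from Solimini \cite{Sol} in the refined form of \cite[Theorem 4.6.4]{ccbook} --- the relevant comparison is with the cited sources, whose approach yours matches essentially verbatim: cocompactness of $\dot H^{m,p}(\R^N)\hookrightarrow L^{p_m^*}(\R^N)$ relative to the dilation--translation group, obtained from a refined Sobolev inequality against a scale- and translation-invariant Besov-type norm, followed by the abstract iterative extraction in a uniformly convex space with a group of dislocations, the stated divergence condition being exactly the equivalence of weak operator nullity of $[g_k^{(m)}]^{-1}g_k^{(n)}$. Two minor points only: Solimini's original paper obtained the vanishing step via an improvement in Lorentz rather than Besov norms (a variation within the same architecture), and for $p\neq 2$ the Bessel-type inequality $\sum_n\|w^{(n)}\|^p\le\liminf\|u_k\|^p$ is not a purely abstract consequence of uniform convexity plus Brezis--Lieb but uses the concrete $L^p$ structure, namely the asymptotic disjointness of the supports of $\nabla^m\bigl(g_k^{(n)}w^{(n)}\bigr)$ after approximating the profiles by compactly supported functions.
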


This theorem cannot be directly applied to sequences bounded in $\dot H^{1,2}_A(\R^N)$ with the magnetic potential as in \eqref{eq:Ahab}. However its proof can be trivially modified for this space, given that  $\dot H^{1,2}_A(\R^N)$ is continuously embedded into $H^{1,2}_\mathrm{loc}(\R^N)$ and into $L^{p^*}(\R^N)$. The modifications are analogous to the argument used in the proof of Theorem~\ref{thm:MagneticPD} in the subcritical case and are omitted. We have:

\begin{theorem}\label{thm:PD*}%
	Let $(u_{k})$ be a bounded sequence in $\dot H^{1,2}_A(\R^N)$, $N\ge 3$. Then it has a renamed subsequence and there exist
functions	$w^{(n)}\in {H}_\mathrm{loc}^{1,2}(\R ^{N})$ and sequences of bounded operators in ${H}_\mathrm{loc}^{1,2}(\R ^{N})$, $(g_{k}^{(n)})_{k\in \N }$,
such that $g_{k}^{(n)}u\eqdef 2^{
		\frac{N-2}{2}j_{k}^{(n)}}u(2^{j_{k}^{(n)}}(\cdot -y_{k}^{(n)}))$,
	$j_{k}^{(n)}\in \Z $, $y_{k}^{(n)}\in \R ^{N}$, $n\in\N$, with
	%
	%e4.20 #&#
	\begin{equation}
		\label{eq:profileHm} j_{k}^{(0)}=0, y_{k}^{(0)}=0,		\bigl| j_{k}^{(n)}-j_{k}^{(m)} \bigr
		| +\bigl(2^{j_{k}^{(n)}}+2^{j
			_{k}^{(m)}}\bigr) \bigl|
		y_{k}^{(n)}-y_{k}^{(m)} \bigr| \to
		\infty ,m\neq n,
	\end{equation}
	such that $[g_{k}^{(n)}]^{-1}u_{k}\rightharpoonup w^{(n)}$ in ${H}^{1,2}_\mathrm{loc}(\R ^{N})$,
	%
	%e4.21 #&#
	\begin{equation}
		\label{eq:B-Basymp*} u_{k}-\sum_{n\in \N }
		g_{k}^{(n)}w^{(n)}\to 0 \text{ in } L^{p
			^{*}}
		\bigl(\R ^{N}\bigr),
	\end{equation}
	the series $\sum_{n\in \N } g_{k}^{(n)}w^{(n)}$ converges in
${H}^{1,2}_\mathrm{loc}(\R ^{N})$ unconditionally and uniformly in $k$, and
	%
	%e4.22 #&#
	\begin{equation}
		\label{eq:normSob*} \sum_{n\in \N }\int_{\R ^{N}}
		\bigl| \nabla_{A^{(n)}}w^{(n)} \bigr| ^{2}
		\dx{x} \le \liminf \int_{\R ^{N}} \bigl| \nabla_A u_{k} \bigr| ^{2}\dx{x},
	\end{equation}
	where
	\begin{equation}
		A^{(n)}=\lim_{k\to\infty} 2^{-j_{k}^{(n)}}A(2^{j_{k}^{(n)}}\cdot+y_{k}^{(n)}).
	\end{equation}
\end{theorem}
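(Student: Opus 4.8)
The plan is to adapt Solimini's profile decomposition (Theorem~\ref{thm:PDsob}) to the magnetic functional $E_A$, in the same spirit in which Theorem~\ref{thm:MagneticPD} adapts a translation-invariant decomposition to magnetic shifts. The difference is that the relevant non-compact group is now the dilation--translation group acting through the operators $g^{(n)}_k$, which here carry no phase factor, and that the Aharonov--Bohm potential is not preserved by this group but transformed by its homogeneity. Three facts drive the reduction: the diamagnetic inequality \eqref{eq:diamag}, which gives $E_A(u)\ge E_0(|u|)\ge C\|u\|_{2^*}^2$ and hence boundedness of $(u_k)$ in $L^{2^*}(\R^N)$ and of $(|u_k|)$ in $\dot H^{1,2}(\R^N)$; the continuous embedding $\dot H^{1,2}_A(\R^N)\hookrightarrow H^{1,2}_{\mathrm{loc}}(\R^N)$, which with the compactness $H^{1,2}_{\mathrm{loc}}\hookrightarrow\hookrightarrow L^2_{\mathrm{loc}}$ supplies local compactness; and the boundedness of each $g^{(n)}_k$ on $H^{1,2}_{\mathrm{loc}}(\R^N)$.

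First I would run Solimini's iterative extraction on the moduli. Since $g^{(n)}_k$ acts without a phase, $|[g^{(n)}_k]^{-1}u_k|=[g^{(n)}_k]^{-1}|u_k|$, so the $L^{2^*}$ concentration of $(u_k)$ is exactly that of the real sequence $(|u_k|)$, to which Theorem~\ref{thm:PDsob} applies. As long as the current remainder does not vanish in $L^{2^*}(\R^N)$, this produces scale--position parameters $(j^{(n)}_k,y^{(n)}_k)$, and along a further subsequence $[g^{(n)}_k]^{-1}$ of the remainder has a nonzero weak limit $w^{(n)}$ in $H^{1,2}_{\mathrm{loc}}(\R^N)$, obtained by local compactness. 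The separation \eqref{eq:profileHm}, the unconditional and $k$-uniform convergence of $\sum_n g^{(n)}_kw^{(n)}$ in $H^{1,2}_{\mathrm{loc}}(\R^N)$, and the decay \eqref{eq:B-Basymp*} of the remainder are then inherited from the proof of Theorem~\ref{thm:PDsob} through the same Brezis--Lieb iteration; the $L^{2^*}$ statements are unaffected because $\|u\|_{2^*}=\||u|\|_{2^*}$.

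Next I would identify the asymptotic potentials. A change of variables gives $E_A(g^{(n)}_kw)=E_{A^{(n)}_k}(w)$ with $A^{(n)}_k(z)=2^{-j^{(n)}_k}A\bigl(2^{-j^{(n)}_k}z+y^{(n)}_k\bigr)$, and, because $A$ is homogeneous of degree $-1$ and depends only on $(x_1,x_2)$, this equals the transverse translate $A(\cdot+\tau^{(n)}_k)$ with $\tau^{(n)}_k=2^{j^{(n)}_k}\bigl((y^{(n)}_k)_1,(y^{(n)}_k)_2,0,\dots,0\bigr)$. Passing to a subsequence, either $\tau^{(n)}_k$ stays bounded, and $A^{(n)}:=\lim_k A^{(n)}_k$ is a translate of $A$ (indeed $A$ itself when the concentration sits on the singular axis $x_1=x_2=0$), or $|\tau^{(n)}_k|\to\infty$, the singularity escapes to infinity, and $A^{(n)}=0$. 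This is precisely why each limiting potential is, up to a translation absorbed into $w^{(n)}$, either $0$ or $A$.

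The main obstacle is the energy inequality \eqref{eq:normSob*}. For each $M$ I would expand $E_A(u_k)=E_A\bigl(\sum_{n\le M}g^{(n)}_kw^{(n)}+\rho^{(M)}_k\bigr)$ and show that the bubbles $g^{(n)}_kw^{(n)}$ are asymptotically orthogonal for $E_A$, the magnetic interaction integrals vanishing as $k\to\infty$ by the scale--position separation \eqref{eq:profileHm}, exactly as in the non-magnetic case; this yields $E_A(u_k)\ge\sum_{n\le M}E_{A^{(n)}_k}(w^{(n)})+o(1)$. It then remains to pass to the limit $E_{A^{(n)}_k}(w^{(n)})\to E_{A^{(n)}}(w^{(n)})$. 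Unlike the bounded-potential subcritical setting of Remark~\ref{rem:Lipschitz}, where Arzel\`a--Ascoli gives uniform convergence of the shifted potentials on bounded sets, here $A$ is unbounded near its axis and $A^{(n)}_k$ is a translate of a singular potential, so the genuinely delicate point is the contribution of $\int|A^{(n)}_k|^2|w^{(n)}|^2$ near the moving singularity $z=-\tau^{(n)}_k$. I expect the decisive tool to be the magnetic Hardy inequality for the Aharonov--Bohm potential: it bounds this singular integral by $E_{A^{(n)}_k}(w^{(n)})$, renders $w^{(n)}$ admissible for the limiting magnetic energy, and legitimizes both the orthogonality estimate and the passage to the limit by dominated convergence (the integral vanishing outright when $A^{(n)}=0$). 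Letting $M\to\infty$ then gives \eqref{eq:normSob*} and completes the decomposition.
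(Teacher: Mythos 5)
Your outline matches the paper's intention in broad strokes --- the paper itself omits the proof, saying only that Solimini's argument (Theorem~\ref{thm:PDsob}) is to be modified using the embeddings of $\dot H^{1,2}_A(\R^N)$ into $H^{1,2}_{\mathrm{loc}}(\R^N)$ and $L^{2^*}(\R^N)$, in analogy with the subcritical magnetic argument of \cite{DT} --- and your treatment of the limiting potentials via the homogeneity of the Aharonov--Bohm potential, and of \eqref{eq:normSob*} via bubble orthogonality and the magnetic Hardy inequality, is sound. However, there is a genuine gap at the central step: you justify the remainder decay \eqref{eq:B-Basymp*} by saying the $L^{2^*}$ statements transfer from the moduli decomposition ``because $\|u\|_{2^*}=\||u|\|_{2^*}$''. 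This does not work. What Solimini's theorem applied to $(|u_k|)$ gives is $\bigl\||u_k|-\sum_n g^{(n)}_k|w^{(n)}|\bigr\|_{2^*}\to 0$, and the elementary inequality $\bigl||a|-|b|\bigr|\le |a-b|$ runs in the wrong direction: the moduli remainder is dominated by the complex remainder, not conversely. Since $\bigl|\sum_n g^{(n)}_k w^{(n)}\bigr|$ and $\sum_n g^{(n)}_k|w^{(n)}|$ differ pointwise, no norm identity converts decay of the moduli remainder into decay of $\bigl\|u_k-\sum_n g^{(n)}_k w^{(n)}\bigr\|_{2^*}$, which is precisely the assertion \eqref{eq:B-Basymp*} to be proved.

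The missing idea is the saturation-implies-vanishing mechanism that actually drives Solimini's proof, applied to the complex sequence. Two ingredients repair your argument. First, completeness of the moduli decomposition does yield saturation for $(u_k)$ itself: if $(j_k,y_k)$ is separated, in the sense of \eqref{eq:profileHm}, from all chosen scale--position sequences, then any weak $H^{1,2}_{\mathrm{loc}}$ limit $w$ of the correspondingly rescaled $u_k$ satisfies $w=0$, because weak $H^{1,2}_{\mathrm{loc}}$ convergence gives a.e.\ convergence on a subsequence, so $|w|$ coincides with the (vanishing) weak limit of the rescaled moduli; this is the analogue of \eqref{eq:gsaturation}. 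Second, to pass from saturation to \eqref{eq:B-Basymp*} you need the quantitative vanishing lemma inside Solimini's proof applied to the complex remainder, and that requires its boundedness in $\dot H^{1,2}(\R^N;\C)$; this is where the magnetic Hardy inequality is indispensable already at this stage (not only in the energy estimate), since $\|\nabla u_k\|_2\le\|\nabla_A u_k\|_2+\|A u_k\|_2$ and $\int_{\R^N}|A|^2|u_k|^2\dx{x}\le C E_A(u_k)$ for the Aharonov--Bohm potential (non-integer flux; integer flux is gauge-trivial). With these two points in place your scheme closes --- and indeed it then becomes simpler to drop the detour through the moduli and run the extraction directly on the complex sequence, with profiles defined as weak $H^{1,2}_{\mathrm{loc}}$ limits exactly as in Theorem~\ref{thm:MagneticPD}, which is the route the paper intends.
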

Moreover  under conditions of Theorem~\ref{thm:PDsob} one has the following "iterated Brezis-Lieb lemma" (\cite[Theorem 4.7.1]{ccbook}):
	\begin{equation}
		\label{eq:2*} 
		 \int_{\R ^{N}} \bigl|u_{k} \bigr| ^{2^*}\dx{x} 
\longrightarrow		\sum_{n\in \N }\int_{\R ^{N}}
		\bigl|w^{(n)} \bigr| ^{2^*}
		\dx{x} .
\end{equation}
An elementary modification of the argument from \cite[Theorem 4.7.1]{ccbook} also gives
	\begin{equation}
	\label{eq:2} 
	\int_{\R ^{N}} V(x)\bigl|u_{k} \bigr| ^{2}\dx{x} 
	\longrightarrow		\sum_{n\in \N }\int_{\R ^{N}}V^{(n)}(x)
	\bigl|w^{(n)} \bigr| ^{2}
	\dx{x}, 
\end{equation}
where
	\begin{equation}
	V^{(n)}=\lim_{k\to\infty}2^{-2j_{k}^{(n)}}V(2^{j_{k}^{(n)}}\cdot+y_{k}^{(n)}).
\end{equation}
\bibliographystyle{amsplain}
%    Insert the bibliography data here.

\end{document}